\newtheorem{theo}{Theorem}[section]
\newtheorem{prop}[theo]{Proposition}
\newtheorem{lemma}[theo]{Lemma}
\newtheorem{cor}[theo]{Corollary}
\newtheorem{remark}{Remark}
\newtheorem{definition}[theo]{Definition}
\newtheorem*{thm*}{Theorem}
\begin{document}

\title{Fractal dimensions of subfractals induced by sofic subshifts}
\author{Elizabeth Sattler}
\address{North Dakota State University, PO Box 6050, Fargo, ND 58108}
\email{elizabeth.sattler.1@ndsu.edu}

\maketitle

\begin{abstract}
In this paper, we will consider subfractals of hyperbolic iterated function systems which satisfy the open set condition.  The subfractals will consist of points associated with infinite strings from a subshift of finite type or sofic subshift on the symbolic space.  We find that the zeros of the lower and upper topological pressure functions are lower and upper bounds, respectively, for the Hausdorff, packing, lower and upper box dimensions of the subfractal.
\end{abstract}

\section{Introduction}
One area of interest in fractal geometry is the study of properties which distinguish two distinct fractals.  In particular, fractal dimensions, such as Hausdorff, box, and packing dimensions, have proven to be useful properties that  in a sense, extend our usual notion of topological dimension. 
Numerous results exist for calculating the exact value of fractal dimensions of certain fractals of IFS type, such as self-similar IFSs [4,5,7], or finding bounds for the fractal dimensions for hyperbolic IFSs [2,8].  In this paper, we will focus on specific subsets of fractals of IFS type, namely, subfractals.  

\medskip
Clearly, not every subset of a fractal exhibits fractal-like properties; hence, one must provide a precise definition of a subfractal to produce a genuinely different fractal.  For example, a subset of an IFS fractal may be a contracted copy of the entire fractal, which inherits most of the important properties (including fractal dimensions) from the whole fractal. 

\medskip
In this paper, we have chosen to identify a subfractal of an IFS type fractal  by only considering those points associated with a subshift on the associated symbolic space.   Unless stated otherwise, a subfractal will refer to a subshift-type subfractal for the remainder of this paper.  If a subshift of finite type (SFT) or sofic subshift is chosen, we find that if the subshift does not have full Hausdorff dimension, then the Hausdorff dimension of the subfractal is strictly less than the Hausdorff dimension of the original fractal. 

\medskip
Let $\mathcal{A}$ denote a finite alphabet, $X$ denote the full shift, and $Y \subset X$ be a subshift.  If $Y$ is a SFT, then there exists a matrix $A$ consisting only of 0's and 1's which is associated with the subshift. The entries of $A$ are determined by finite strings in the subshift which are either allowed or not allowed to appear.  
We will use the notation $Y=X_A$ for the subshift associated with $A$.  

\medskip
 Let $\mathcal{K}\subset \mathbb{R}^n$ be a compact subset and $(\mathcal{K}; f_1, \ldots, f_n\}$ be an IFS with $f_i : \mathcal{K} \to \mathcal{K}$ for $1\leq i \leq n$.  The IFS $(\mathcal{K}; f_1, \ldots, f_n\}$ is {\it hyperbolic} if for all $x,y \in \mathcal{K}$, there exists some constant $c$ such that $d(f_i(x), f_i(y)) \leq c d(x,y)$, for all $1\leq i \leq n$.  Let $F$ denote the attractor of this  hyperbolic IFS (HIFS), i.e. $F$ is a non-empty, closed set with $f_i(F) \subset F$ for $1 \leq i \leq n$ and the smallest such set that satisfies these properties.  By [5], we know that an attractor exists for any HIFS. 
Recall that an IFS satisfies the open set condition (OSC) if there exists a nonempty open subset $U \subset \mathcal{K}$ such that $f_i(U) \subset U$ and $f_i(U) \cap f_J(U)  = \emptyset$ for $i \neq j$ and all $ 1 \leq i,j, \leq n$. 

\medskip
Now, let $\mathcal{F}_{X_A}$ be the collection of all points from the full fractal which are associated with a sequence in $X_A$, i.e. $x \in \mathcal{F}_{X_A}$ if there exists some $\omega= \omega_1 \omega_2 \ldots \in X_A$ such that $\displaystyle x =\lim_{k \to \infty} [f_{\omega_k} \circ f_{\omega_{k-1}} \circ \cdots \circ f_{\omega_1}(y)]$, $y \in \mathcal{K}$.  Let $\rho(A)$ denote the spectral radius of a square matrix $A$.  We prove the following:

\begin{thm*}[Main Theorem A]
For a compact subset $\mathcal{K} \subset \mathbb{R}^n$, let $\{\mathcal{K}; f_i : 1 \leq i \leq m\}$ be an HIFS which satisfies the OSC.  Let $0 < c_i \leq  \bar{c}_i < 1$ denote the constants such that $c_id(x,y) \leq d(f_i(x), f_i(y)) \leq \bar{c}_id(x,y)$ for all $1 \leq i \leq m$. Let $X_A$ be an SFT with an associated irreducible square $(0,1)$-matrix $A$. Let $\mathcal{F}_{X_A}$ denote the subfractal associated with the subshift.  Then,
$$ h \leq \dim_H(\mathcal{F}_{X_A}) \leq H \text{ and } h \leq  \overline{\dim}_B(\mathcal{F}_{X_A}) \leq H,$$
where $\rho(AS^{(h)}) = 1 = \rho(A\bar{S}^{(H)})$, $S$ and $\bar{S}$ are the corresponding diagonal matrices with appropriate constants $c_i$ and $\bar{c_i}$ on the diagonal and 0's elsewhere, $(S^{(h)})_{ij} = [S_{ij}]^h$ for all $1\leq i,j \leq N$ and $\bar{S}^{(H)}$ is defined similarly.
\end{thm*}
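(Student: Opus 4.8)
\textit{Plan of proof.} The plan is to bound the dimensions from above by efficient covers of $\mathcal F_{X_A}$ by images of admissible cylinders, and from below by building a Markov measure on $X_A$, pushing it forward along the coding map, and invoking the mass distribution principle. For an admissible word $\omega=\omega_1\cdots\omega_k$ write $f_\omega=f_{\omega_1}\circ\cdots\circ f_{\omega_k}$, $c_\omega=\prod_i c_{\omega_i}$, $\bar c_\omega=\prod_i\bar c_{\omega_i}$, so that $c_\omega\,d(x,y)\le d(f_\omega(x),f_\omega(y))\le\bar c_\omega\,d(x,y)$ on $\mathcal K$, whence $c_\omega\,\mathrm{diam}(\mathcal K)\le\mathrm{diam}(f_\omega(\mathcal K))\le\bar c_\omega\,\mathrm{diam}(\mathcal K)$. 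A preliminary step is to check that $h$ and $H$ are well defined with $0\le h\le H$: since $A$ is irreducible and $0<c_i\le\bar c_i<1$, the matrices $AS^{(s)}$ and $A\bar S^{(s)}$ are irreducible with the same zero pattern as $A$, and $s\mapsto\rho(AS^{(s)})$, $s\mapsto\rho(A\bar S^{(s)})$ are continuous and strictly decreasing on $[0,\infty)$, running from $\rho(A)\ge 1$ down to $0$ (strict monotonicity of the Perron root under a strict decrease of a nonzero entry of an irreducible nonnegative matrix); hence each defining equation has a unique solution, and $c_i\le\bar c_i$ forces $h\le H$. A volume count as below also gives $h\le n$.

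\textit{Upper bounds.} First I would treat the Hausdorff dimension. Fix $s>H$. The finitely many sets $f_\omega(\mathcal K)$ with $\omega$ admissible of length $k$ cover $\mathcal F_{X_A}$ with diameters $\le\bar c_{\max}^{\,k}\,\mathrm{diam}(\mathcal K)\to 0$, and a direct computation identifies the Hausdorff sum with a matrix power,
$$\sum_{|\omega|=k}\bar c_\omega^{\,s}=\mathbf x^{\top}\bigl(A\bar S^{(s)}\bigr)^{k-1}\mathbf 1,\qquad \mathbf x=\bigl(\bar c_1^{\,s},\dots,\bar c_m^{\,s}\bigr)^{\top}.$$
Since $A\bar S^{(s)}$ is irreducible with Perron root $\rho(A\bar S^{(s)})<1$, Perron--Frobenius gives $\sum_{|\omega|=k}\bar c_\omega^{\,s}\asymp\rho(A\bar S^{(s)})^{k}\to 0$, so $\mathcal H^{s}(\mathcal F_{X_A})=0$ and $\dim_H(\mathcal F_{X_A})\le H$. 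For $\overline{\dim}_B$ I would use, for each $\delta>0$, the stopping-time antichain $\Gamma_\delta$ of admissible words with $\bar c_\omega\le\delta<\bar c_{\omega_1\cdots\omega_{|\omega|-1}}$: the sets $f_\omega(\mathcal K)$, $\omega\in\Gamma_\delta$, cover $\mathcal F_{X_A}$ by sets of diameter $\le\delta\,\mathrm{diam}(\mathcal K)$, and summing the identity $\sum_{|\omega|=k}\bar c_\omega^{\,H}\asymp\rho(A\bar S^{(H)})^{k}=1$ over the $O(\log(1/\delta))$ lengths that can occur in $\Gamma_\delta$ (where $\bar c_\omega\asymp\delta$) gives $\#\Gamma_\delta\lesssim\delta^{-H}\log(1/\delta)$. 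Hence $N_\delta(\mathcal F_{X_A})\lesssim\delta^{-H}\log(1/\delta)$ and $\overline{\dim}_B(\mathcal F_{X_A})\le H$.

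\textit{Lower bound.} Set $B=AS^{(h)}$, which is irreducible with $\rho(B)=1$; choose strictly positive left and right Perron eigenvectors $\mathbf u,\mathbf v$ with $\sum_i u_iv_i=1$. Then $P_{ij}=A_{ij}c_j^{\,h}v_j/v_i$ is a stochastic matrix (because $B\mathbf v=\mathbf v$) with stationary vector $\pi_i=u_iv_i$ (because $\mathbf u^{\top}B=\mathbf u^{\top}$); let $\mu$ be the associated Markov measure on $X_A$. Telescoping the ratios $v_j/v_i$ yields $\mu([\omega_1\cdots\omega_k])=(u_{\omega_1}/c_{\omega_1}^{\,h})\,v_{\omega_k}\,c_\omega^{\,h}$, so $\mu([\omega])\asymp c_\omega^{\,h}$ uniformly in $\omega$; let $\nu=\pi_*\mu$ on $\mathcal F_{X_A}$, $\pi$ the coding map. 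It then suffices to prove $\nu(B(x,r))\le C r^{h}$ for all $x\in\mathcal F_{X_A}$ and all small $r$, with $C$ independent of $x$ and $r$: the mass distribution principle then gives $\mathcal H^{h}(\mathcal F_{X_A})\ge\nu(\mathcal F_{X_A})/C>0$, so $\dim_H(\mathcal F_{X_A})\ge h$, and a fortiori $\overline{\dim}_B(\mathcal F_{X_A})\ge\dim_H(\mathcal F_{X_A})\ge h$. To bound $\nu(B(x,r))$ I would cover $\pi^{-1}(B(x,r))$ by the cylinders $[\omega]$ from a suitable stopping-time antichain and estimate how many of them have $f_\omega(\mathcal K)$ meeting $B(x,r)$, bringing in the OSC through the facts that the open sets $f_\omega(U)$ along an antichain are pairwise disjoint with $|f_\omega(U)|\ge c_\omega^{\,n}|U|$, and using the two-sided Lipschitz bounds on $f_\omega$ to control their location and size.

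\textit{Main obstacle.} The hard part is precisely this overlap estimate in the lower bound. Since the $f_i$ need not be conformal, $f_\omega(\mathcal K)$ can be long and thin, and there need not be any single scale at which all the pieces meeting $B(x,r)$ are both comparable in diameter to $r$ and contain a ball of radius $\asymp r$; so the one-scale packing argument familiar from self-similar sets does not apply verbatim. One has to choose the stopping antichain and organize the count carefully — splitting by word length and playing the constants $c_\omega$ and $\bar c_\omega$ off against the Perron--Frobenius normalization of $\mu$ — in the spirit of the hyperbolic-IFS bounds in [2,8]. Everything else (the identification of the exponential sums with matrix powers, the Perron--Frobenius asymptotics, and the well-definedness of $h$ and $H$) is routine.
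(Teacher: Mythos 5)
Your upper-bound arguments and your Perron--Frobenius Markov measure are fine (the measure, with $\mu([\omega])\asymp c_\omega^h$, is in fact a cleaner substitute for the paper's construction via Banach limits and the Kolmogorov extension theorem, which only yields the one-sided bound $\mu_h(f_\omega(\mathcal K))\le (L_0/K_0)\,c_\omega^h$ of Corollary 4.4). But the lower bound is where the theorem actually lives, and there you reduce everything to the estimate $\nu(B(x,r))\le C r^h$ and then explicitly leave its proof open, calling the overlap count ``the main obstacle'' to be organized ``in the spirit of [2,8].'' That count is precisely the paper's Proposition 4.5: for the stopping family $\mathcal U_r=\{f_\omega(\mathcal K):|f_\omega(\mathcal K)|\le r<|f_{\omega^-}(\mathcal K)|\}$, a ball $B(x,r)$ meets at most $M$ elements with $M$ independent of $r$. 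The paper proves this by the one-scale packing argument you suspected was unavailable: by the OSC there is a ball $B_a$ such that the sets $f_\omega(B_a)$, over stopping words whose pieces meet $B(x,r)$, are pairwise disjoint and contained in $B(x,3r)$; using $|f_\omega(\mathcal K)|>c_{\min}r$ each is assigned Lebesgue measure comparable to that of a ball of radius $\asymp r$, and comparing volumes bounds the multiplicity. Combined with Corollary 4.4 and the observation that $c_\omega\le |f_\omega(\mathcal K)|/|\mathcal K|\le r/|\mathcal K|$ for stopping words, this gives $\mu_h(B(x,r))\le M(L_0/K_0)|\mathcal K|^{-h}r^h$, and the mass distribution principle finishes the proof of $h\le\dim_H(\mathcal F_{X_A})$. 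Without some version of this bounded-multiplicity lemma your argument does not close, so the proposal has a genuine gap at the decisive step. (Your unease about non-conformal maps is not frivolous --- the paper's volume comparison rests on the diameter bound $|f_\omega(\mathcal K)|>c_{\min}r$ rather than on a lower bound for $c_\omega$ itself --- but the point stands that the paper commits to this one-scale count and your proposal offers no substitute for it.)

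Two smaller discrepancies. First, the paper does not argue through $\rho(AS^{(t)})$ directly: it defines the pressure functions $P(t)=\lim_n \frac1n\log\sum_{\omega\in W_n}c_\omega^t$ and $\bar P(t)$, shows they have unique zeros $h\le H$, derives the uniform bounds $K_0\le\sum_{\omega\in W_n}c_\omega^h\le L_0$ and $K_1\le\sum_{\omega\in W_n}\bar c_\omega^H\le L_1$ (Propositions 4.2--4.3, via the Perron--Frobenius estimate of Lemma 4.1), and only afterwards identifies the zeros with the spectral-radius conditions (Remark 2); your route through monotonicity of the Perron root is equivalent, so this is a difference of bookkeeping, not substance. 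Second, your identity $\sum_{|\omega|=k}\bar c_\omega^{\,s}=\mathbf x^\top(A\bar S^{(s)})^{k-1}\mathbf 1$ with $\mathbf x=(\bar c_1^{\,s},\dots,\bar c_m^{\,s})^\top$ is written for a $1$-step SFT on the alphabet; the theorem allows forbidden words of arbitrary length $k$, where $A$ is the $N\times N$ higher-block matrix with $N=m^{k-1}$, and the identification of $\sum_{\omega\in W_n}c_\omega^t$ with matrix powers requires the matrices $S_0$, $S$ and the ordering of the diagonal entries as in the paper's Section 3 and Lemma 3.6 --- routine, but it needs to be stated in that generality for your upper bounds and your Markov measure to apply to the theorem as claimed.
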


\medskip
Next, we turn our attention to a broader class of subshifts, the sofic subshifts.  The class of sofic subshifts not only contains all SFTs but also all factors of SFTs.  A common example of a sofic subshift which is not an SFT is the Golden Mean Shift, which has forbidden word list $F =\{101, 10001, \ldots, 10^{2k+1}1, \ldots\}$ on the alphabet $\mathcal{A} = \{0,1\}$.  

\medskip
As in the case of subfractals induced by SFTs, a subfractal induced by a sofic subshift can be represented by a matrix $A_{\mathcal{G}}$; however, the entries of $A_{\mathcal{G}}$ consist of sums of contractive factors associated with finite, allowable strings from the subshift determined by an underlying labeled graph $\mathcal{G}$.  Hence, we must alter the techniques we used for SFTs to compensate for the differences in the matrices associated with the subshifts.  Let $\mathcal{L}$ denote the labeling with the lower contractive bounds and $\bar{\mathcal{L}}$ denote the labeling with the upper contractive bounds.  See Section 5 for details on this labeling.  We prove the following:

\begin{thm*}[Main Theorem B]
For a compact subset $\mathcal{K} \subset \mathbb{R}^n$, let $\{\mathcal{K}; f_i : 1 \leq i \leq m\}$ be an HIFS which satisfies the OSC.  Let $0 < c_i \leq  \bar{c}_i < 1$ denote the constants such that $c_id(x,y) \leq d(f_i(x), f_i(y)) \leq \bar{c}_id(x,y)$ for all $1 \leq i \leq m$.
 Let $X_{\mathcal{G}}$ be a sofic subshift wtih irreducible matrices  $A =(a_{ij})_{1\leq i,j \leq k}$ and $\bar{A} =(\bar{a}_{ij})_{1\leq i,j \leq k}$ with $a_{ij} = \sum_{e_{ij}} \mathcal{L}(e_{ij})$ and $\bar{a}_{ij} = \sum_{e_{ij}} \bar{\mathcal{L}}(e_{ij})$.
Then,
$$ h \leq \dim_H(\mathcal{F}_{X_\mathcal{G}}) \leq H \text{ and } h \leq  \overline{\dim}_B(\mathcal{F}_{X_\mathcal{G}}) \leq H,$$
where $\rho(A_{ h}) = 1 = \rho(A_H)$, and $A_h$, $A_{H}$ have entries $a_{ij}^{(h)} = \sum_{e_{ij}} \mathcal{L}(e_{ij})^h$,  $a_{ij}^{(H)} = \sum_{e_{ij}} \bar{\mathcal{L}}(e_{ij})^H$, respectively.
\end{thm*}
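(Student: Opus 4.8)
The plan is to derive all four inequalities from two: since $\dim_H \le \overline{\dim}_B$ always holds, it suffices to establish $\overline{\dim}_B(\mathcal F_{X_\mathcal G}) \le H$ and $h \le \dim_H(\mathcal F_{X_\mathcal G})$, after which $\dim_H \le H$ and $h \le \overline{\dim}_B$ are automatic. I will work on the edge shift $\Sigma_\mathcal G$ of the labeled graph $\mathcal G$, which is an honest SFT, together with the coding map $\pi \colon \Sigma_\mathcal G \to \mathcal F_{X_\mathcal G}$, $\pi(e_1 e_2 \cdots) = \bigcap_N f_{e_1}\circ\cdots\circ f_{e_N}(\mathcal K)$, where $f_e$ denotes the IFS map attached to the symbol carried by the edge $e$. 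For a finite path $p = e_1\cdots e_N$ set $f_p = f_{e_1}\circ\cdots\circ f_{e_N}$, $c_p = \prod_l \mathcal L(e_l)$, and $\bar c_p = \prod_l \bar{\mathcal L}(e_l)$; the one estimate used repeatedly is $c_p\,\mathrm{diam}(\mathcal K) \le \mathrm{diam}(f_p(\mathcal K)) \le \bar c_p\,\mathrm{diam}(\mathcal K)$, and the point is that $c_p$ and $\bar c_p$ depend only on the \emph{word} labelling $p$, so parallel edges cause no trouble at the geometric stage. Since $A_h$ and $A_H$ inherit irreducibility and nonnegativity from $A$ and $\bar A$ and have spectral radius $1$, Perron--Frobenius gives each of them strictly positive left and right eigenvectors, uniformly bounded powers, and --- via the right eigenvector --- row sums of every power trapped in a fixed interval in $(0,\infty)$; these facts are the quantitative substitute, in the sofic setting, for $\rho(AS^{(h)}) = 1 = \rho(A\bar S^{(H)})$ in Main Theorem~A. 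The only genuinely new bookkeeping is that powers of $A_h$, $A_H$ sum weighted \emph{paths} in $\mathcal G$ rather than weighted words.

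For the upper bound, fix small $\delta$ and let $\mathcal W_\delta$ be the set of finite paths $p$ with $\bar c_p \le \delta < \bar c_{p^-}$, where $p^-$ drops the last edge; as $\bar c_i < 1$, this is a finite antichain whose cylinders cover $\Sigma_\mathcal G$, so $\{f_p(\mathcal K) : p \in \mathcal W_\delta\}$ covers $\mathcal F_{X_\mathcal G}$ by sets of diameter at most $\delta\,\mathrm{diam}(\mathcal K)$. Because $\bar c_p > \delta\min_i\bar c_i$ on $\mathcal W_\delta$, the covering number satisfies $N_\delta(\mathcal F_{X_\mathcal G}) \le \#\mathcal W_\delta \le (\delta\min_i\bar c_i)^{-H}\sum_{p \in \mathcal W_\delta}\bar c_p^{\,H}$. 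Extending every $p \in \mathcal W_\delta$ in all admissible ways to a common length $K = \max_{p\in\mathcal W_\delta}|p|$ and using disjointness of the resulting families together with the row-sum bound $\sum_{q \supseteq p,\,|q|=K}\bar c_q^{\,H} = \bar c_p^{\,H}(A_H^{K-|p|}\mathbf 1)_{t(p)} \ge c_0\,\bar c_p^{\,H}$, where $c_0, C_0 > 0$ bound the row sums of all powers of $A_H$, one gets
\[
\sum_{p\in\mathcal W_\delta}\bar c_p^{\,H} \le c_0^{-1}\,\mathbf 1^{\top}A_H^{K}\mathbf 1 \le c_0^{-1}C_0\,k,
\]
independently of $\delta$. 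Hence $N_\delta(\mathcal F_{X_\mathcal G}) = O(\delta^{-H})$, so $\overline{\dim}_B(\mathcal F_{X_\mathcal G}) \le H$, and the same cover shows $\mathcal H^{H}(\mathcal F_{X_\mathcal G}) < \infty$. This is the direct analogue of the SFT computation, with the single matrix $A_H$ in the role of $A\bar S^{(H)}$.

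For the lower bound, let $u, v > 0$ be left and right Perron eigenvectors of $A_h$ and put $Z = \sum_i u_i v_i$. The Markov measure $\mu$ on $\Sigma_\mathcal G$ with edge-transition weights $P(e) = \mathcal L(e)^h\,v_{t(e)}/v_{i(e)}$ (stochastic out of each vertex, since $A_h v = v$) and stationary vertex distribution proportional to $u_i v_i$ satisfies $\mu([e_1\cdots e_N]) = Z^{-1}u_{i(e_1)}v_{t(e_N)}\prod_{l=1}^{N}\mathcal L(e_l)^{h}$, hence $\mu([p]) \asymp c_p^{\,h}$ uniformly in $p$. Let $\nu = \pi_*\mu$, a Borel probability measure on $\mathcal F_{X_\mathcal G}$. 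Fix $x \in \mathcal F_{X_\mathcal G}$ and small $r$, and let $\mathcal W_r$ be the cut-set $\{p : \bar c_p \le r < \bar c_{p^-}\}$. If $e_1 e_2 \cdots$ codes a point of $B(x,r)$, that point lies in $f_p(\mathcal K)$ for the unique prefix $p \in \mathcal W_r$ --- by nestedness of $f_{e_1}\circ\cdots\circ f_{e_M}(\mathcal K) \subseteq f_p(\mathcal K)$ for $M \ge |p|$ --- so $f_p(\mathcal K) \cap B(x,r) \ne \emptyset$; thus $\pi^{-1}(B(x,r)) \subseteq \bigcup\{[p] : p \in \mathcal W_r,\ f_p(\mathcal K)\cap B(x,r)\ne\emptyset\}$. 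Each such $f_p(\mathcal K)$ has diameter at most $r\,\mathrm{diam}(\mathcal K)$, and the open set condition makes the sets $f_p(U)$, $p \in \mathcal W_r$, pairwise disjoint; granting that a ball of radius $r$ meets at most $M$ of them, we get $\nu(B(x,r)) \le M\max_{p\in\mathcal W_r}\mu([p]) \le M C\, r^{h}$. By the mass distribution principle, $\mathcal H^{h}(\mathcal F_{X_\mathcal G}) \ge (MC)^{-1} > 0$, so $h \le \dim_H(\mathcal F_{X_\mathcal G})$.

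The step I expect to be the main obstacle is precisely that last bound $M$: for similarities each $f_p(U)$ contains a ball comparable in size to its diameter, so a volume/packing count produces $M$ immediately, but here the ratio $\bar c_p / c_p$ may grow with $|p|$, so a cut-set image can be geometrically thin and the naive count breaks down. Handling this --- either by proving a dedicated overlap lemma under the hypotheses $0 < c_i \le \bar c_i < 1$ (exploiting that the bi-Lipschitz map $f_p$ distorts $n$-dimensional Lebesgue measure by a factor in $[c_p^{\,n}, \bar c_p^{\,n}]$), or by replacing the single cut-set with a two-scale construction that controls diameters and inner radii separately --- is where the real work lies; once it is in place, the remaining labelled-graph accounting (propagating the eigenvector constants through $\mu$ and dealing with words shared by several paths, which is the sole structural difference from Main Theorem~A) is routine.
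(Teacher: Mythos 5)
Your skeleton is essentially the paper's: identify the zeros of the pressure functions with $\rho(A_{\mathcal G,h})=1=\rho(\overline A_{\mathcal G,H})$ by relating $\sum_{\omega\in W_n}c_\omega^t$ to powers of a weighted adjacency matrix of the right-resolving presentation (the paper's Lemma 5.1 does this with the multiplicity bound $k$, exactly your ``at most $k$ path representations per word''), then get the upper bound from a stopping-time cover with $\sum\bar c_\omega^H$ uniformly bounded, and the lower bound from a measure with $\mu([p])\asymp c_p^h$ via the mass distribution principle. Your Markov measure built from the Perron eigenvectors of $A_h$ is a legitimate, arguably cleaner, substitute for the paper's Banach-limit measure $\nu$, and your row-sum estimates for $A_H^n$ play the role of Propositions 4.2--4.3; the upper-bound half of your argument is complete as written.

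The lower bound, however, is not finished, and the hole sits at the decisive step. You explicitly defer the bound $M$ on the number of cut-set pieces $f_p(\mathcal K)$ that a ball $B(x,r)$ can meet, calling it ``where the real work lies'' and offering only two candidate strategies; but the mass distribution principle cannot be applied without it, so $h\le\dim_H(\mathcal F_{X_\mathcal G})$ is not established by the proposal. In the paper this is Proposition 4.5, proved once for the underlying HIFS from the OSC by a Lebesgue volume comparison (the open set supplies a ball $B_a$ with the images $f_\omega(B_a)$ pairwise disjoint and contained in $B(x,3r)$), after which Section 5 needs no new geometric input --- Theorem 5.2 literally reruns Theorems 4.6--4.7 once Lemma 5.1 is in place; your plan still owes exactly that lemma (or an adaptation of it to your $\bar c_p$-defined cut-set). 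A second, smaller defect: your claim that the OSC makes the sets $f_p(U)$, $p\in\mathcal W_r$, pairwise disjoint is false as stated, since two distinct paths in $\mathcal G$ carrying the same label word give $f_p=f_q$ and hence identical (not disjoint) sets; you must group cut-set paths by their label words, apply the OSC only to distinct incomparable words, and absorb the path-per-word multiplicity (at most $k$, by right-resolving --- the same device as Lemma 5.1) into the constant. Both repairs are available, but as written the proposal proves only the upper bounds.
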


\begin{remark} 
Theorem A will be split into Theorems 4.6 and 4.7 (for Hausdorff dimension bounds and upper box dimension bounds, respectively) in the case where $A$ is an irreducible matrix.  Similarly, the case in which matrix $A_{\mathcal{G}}$ from Theorem B is irreducible will be presented as Theorem 5.2.  Theorem 6.3 will extend the results for Hausdorff dimension of subfractals defined by either a SFT or sofic subshift with a reducible matrix.
\end{remark}

\medskip
These results generalize previously proven results, including Theorems 1.1 and 1.2 below, which analyze different types of subfractals [2,8].
Following the notation and terminology in [8], we say that $A$ is {\it primitive} if there exists some integer $N$ such that $(A^N)_{ij} > 0$ for all $1 \leq i,j \leq n$, where $(A^N)_{ij}$ denotes the $ij$-entry of $A^N$.  
A sequence of integers $(i_l)_{l \geq 1}$, where $i_l \in \{1, \ldots, n\}$, is said to be {\it admissible} if $(A)_{i_l,i_{l+1}} \neq 0$ for all $l \geq 1$.  
Let $F_A$ denote the collection of all points in $F$ which are associated with an admissible sequence with respect to $A$.

\medskip
An HIFS is called {\it disjoint} if $f_i(F) \cap f_j(F) = \emptyset$ for all $i \neq j$ and $1 \leq i,j \leq n$.  In [2], Ellis and Branton proved the following theorem.
\begin{theo}
Let $F$ be the attractor of a disjoint HIFS $(\mathcal{K}; f_1, \ldots, f_n)$, and let $A$ be a primitive (0,1)-matrix.   Suppose that 
$$ s_i d(x,y) \leq d(f_i(x), f_i(y)) \leq \bar{s_i} d(x,y),$$
for all $x,y \in \mathcal{K}$, $1 \leq i \leq n$, and for some constants $0 < s_i \leq \overline{s_i} <1$.  Then, $\dim_H(F_A) \leq u$, where $\rho(A\bar{S}^u)=1$ and $\bar{S}$ is the diagonal matrix with $\text{diag}(\bar{s_1}, \ldots, \bar{s_n})$.

\end{theo}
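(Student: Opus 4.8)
The plan is to bound the Hausdorff measure $\mathcal{H}^u(F_A)$ directly, by covering $F_A$ at every scale with the natural cylinder sets of the IFS and showing that the associated cost sum stays bounded as the scale shrinks — and this boundedness is precisely what the normalization $\rho(A\bar S^{u}) = 1$ buys us. No lower bound is attempted, so, as will be clear, the disjointness hypothesis plays no role in this direction; it is needed only for the companion lower estimate.

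For a finite word $w = w_1 w_2 \cdots w_k$ write $f_w := f_{w_1}\circ f_{w_2}\circ\cdots\circ f_{w_k}$, and call $w$ \emph{admissible} if $A_{w_l w_{l+1}} = 1$ for $1 \leq l < k$. Iterating the upper Lipschitz bounds gives $\operatorname{diam} f_w(\mathcal{K}) \leq \bar s_{w_1}\bar s_{w_2}\cdots\bar s_{w_k}\operatorname{diam}\mathcal{K}$. By the definition of $F_A$, each of its points is a limit of iterates along an admissible infinite sequence and therefore lies in $f_w(\mathcal{K})$ for the length-$k$ prefix $w$ of that sequence; since $f_i(\mathcal{K})\subseteq\mathcal{K}$, this holds for every $k$, so $\{f_w(\mathcal{K}) : w \text{ admissible},\ |w| = k\}$ covers $F_A$ for each $k$. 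With $\bar s := \max_i \bar s_i < 1$ every such cylinder has diameter at most $\bar s^{\,k}\operatorname{diam}\mathcal{K} \to 0$ uniformly, so these are admissible $\delta$-covers for the Hausdorff measure.

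It remains to control $\Sigma_k := \sum_{w\ \mathrm{adm},\ |w|=k}(\operatorname{diam} f_w(\mathcal{K}))^u \leq (\operatorname{diam}\mathcal{K})^u\sum_{w}(\bar s_{w_1}\cdots\bar s_{w_k})^u$. Letting $D := \operatorname{diag}(\bar s_1^{\,u},\ldots,\bar s_n^{\,u}) = \bar S^{u}$ and $\mathbf{1}$ the all-ones vector, one checks that an admissible length-$k$ word is exactly a walk of $k-1$ steps in the digraph of $A$ contributing the product of its vertex weights $\bar s_i^{\,u}$, so $\sum_{w}(\bar s_{w_1}\cdots\bar s_{w_k})^u = \mathbf{1}^{T} D (AD)^{k-1}\mathbf{1} = \mathbf{1}^{T}(DA)^{k-1}D\,\mathbf{1}$. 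Now $A\bar S^{u} = AD$ is conjugate to $DA$ via $D$, hence primitive — its zero pattern is that of $A$ — with spectral radius $1$; by Perron--Frobenius $(DA)^{k-1}$ converges to a positive matrix, in particular is bounded uniformly in $k$. Therefore $\Sigma_k \leq C$ for a constant $C = C(\mathcal{K}, A, \bar S, u)$ independent of $k$, whence $\mathcal{H}^{u}(F_A) \leq C < \infty$ and $\dim_H(F_A)\leq u$. One may also note that $t\mapsto\rho(A\bar S^{t})$ is continuous and strictly decreasing from $\rho(A)\geq 1$ toward $0$, which makes $u$ well defined, and that had one only $\rho(A\bar S^{u})<1$ the same argument would give $\mathcal{H}^{u}(F_A)=0$.

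The main obstacle I expect is bookkeeping rather than anything deep: pinning down the correspondence between admissible words and powers of $A\bar S^{u}$ with the correct convention for the order of composition and for which index of $A$ carries the constraint, and keeping the two boundary vertices $w_1, w_k$ weighted consistently in the identity for $\Sigma_k$ — the choice between $\mathbf{1}^{T}D(AD)^{k-1}\mathbf{1}$ and $\mathbf{1}^{T}(AD)^{k-1}D\mathbf{1}$. A second, minor, point deserving an explicit line is verifying that the cylinder family really covers $F_A$ at \emph{every} level, so that $\mathcal{H}^{u}$ — and not merely some finite-level pre-measure — is being estimated; once the estimate $\Sigma_k \asymp \rho(A\bar S^{u})^{k}$ is in hand, the Perron--Frobenius input is entirely standard.
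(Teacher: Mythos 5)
Your proof is correct, and it is essentially the argument the paper itself relies on: the paper cites this statement to Ellis--Branton and proves its generalization (the upper bound in Theorem 4.6) by exactly your mechanism, namely covering the subfractal by the images $f_\omega(\mathcal{K})$ over admissible words of length $k$ and bounding $\sum_\omega \bar{c}_\omega^{\,u}$ uniformly in $k$ via Perron--Frobenius at spectral radius $1$ (there packaged as Lemma 4.1, Proposition 4.3 and the pressure function $\bar{P}$ with $\bar{P}(H)=\log\rho(A\bar{S}^{(H)})=0$). Your writing the bound directly as $\mathbf{1}^{T}D(AD)^{k-1}\mathbf{1}$ with $D=\bar{S}^{u}$ merely bypasses the explicit pressure-function formalism, which the paper needs anyway for the lower bound; the substance is the same.
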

\medskip
In the same paper [2], Ellis and Branton made the following conjecture for the lower bound: $\dim_H(F_A) \geq l$ where $\rho(AS^l)=1$ and $S$ is a diagonal matrix with $s_1, \ldots, s_n$ on the diagonal and zeros elsewhere.

\medskip
An $n \times n$ matrix $A$ is called {\it irreducible} if for all $1\leq i,j \leq n$, there exists some finite sequence $(i_l)_{1 \leq l \leq m}$ with $i=i_1$ and $j=i_m$  such that $(A)_{i_l, i_{l+1}} (A)_{i_{l+1}, i_{l+2}} > 0$ for $1 \leq l \leq m$. 
Every primitive matrix is irreducible, but there exist matrices which are irreducible and not primitive [6].
 Let $N \geq 2$ and $\{\mathcal{K} ; f_{ij}, (A)_{ij} : 1 \leq i \leq N \}$, where $f_{ij}:\mathcal{K} \to \mathcal{K}$ is a hyperbolic map for $1 \leq i,j \leq N$ and $A$ is an irreducible $(0,1)$-matrix.  
The system $\{\mathcal{K} ; f_{ij}, (A)_{ij} : 1 \leq i \leq N \}$ is called a {\it hyperbolic recurrent IFS}.

\medskip
A particular case of Roychowdhury's result below not only proves the conjecture proposed by Ellis and Branton, but also generalizes Theorem 1.1 by allowing the matrix $A$ to be irreducible and  requiring the IFS to satisfy the OSC:
\begin{theo}
Let $\{\mathcal{K}; f_{ij}, (A)_{ij} : 1 \leq i,j  \leq N\}$ be a hyperbolic recurrent IFS which satisfies the open set condition and assume $A$ is irreducible.  Let $F_A$ be the attractor of the system.  Then,
$$ h \leq \dim_H(F_A) \leq H \text{ and } h \leq \overline{\dim}_H(F_A) \leq H,$$
where $h$ and $H$ are given by $\rho(((A)_{ij}s_{ij}^h)_{1\leq i,j \leq N}) = 1$ and $\rho(((A)_{ij} \bar{s}_{ij})_{1 \leq i,j \leq N })$ and $s_{ij}, \bar{s}_{ij}$ are given by $s_{ij}d(x,y) \leq d(f_{ij}(x), f_{ij}(y)) \leq \bar{s}_{ij} d(x,y)$.
\end{theo}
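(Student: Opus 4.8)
The plan is to prove the two inequalities of Theorem 1.2 separately: the upper bound $\overline{\dim}_B(F_A) \le H$ (hence also the Hausdorff bound) by a direct covering argument, and the lower bound $h \le \dim_H(F_A)$ by a mass‑distribution argument that exploits the open set condition. Throughout, the key algebraic input is Perron--Frobenius theory for the irreducible nonnegative matrices $B_h = ((A)_{ij}s_{ij}^h)_{1\le i,j\le N}$ and $\bar B_H = ((A)_{ij}\bar s_{ij}^H)_{1\le i,j\le N}$: since $\rho(B_h) = 1 = \rho(\bar B_H)$, each admits strictly positive left and right eigenvectors for the eigenvalue $1$, and these positive eigenvectors will play the role that $A^M>0$ plays in the primitive case.

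For the upper bound, fix a small $\delta>0$. For each starting symbol $i$ let $W_i(\delta)$ be the cut set of admissible words $\omega = i_1 i_2\cdots i_k$ with $i_1 = i$ and $\bar s_{i_1 i_2}\bar s_{i_2 i_3}\cdots \bar s_{i_{k-1} i_k} \le \delta < \bar s_{i_1 i_2}\cdots \bar s_{i_{k-2} i_{k-1}}$. Then $\{f_\omega(\mathcal K) : \omega\in W_i(\delta),\ 1\le i\le N\}$ covers $F_A$, and each piece has diameter comparable to $\delta$. Grouping these words by length and using the two‑sided ratio bounds, the weighted count $\sum_{\omega\in W_i(\delta)}\bigl(\prod \bar s\bigr)^H$ is controlled by the partial sums $\sum_k (\bar B_H^{\,k})_{i\,\cdot}$ along the cut set; because $\rho(\bar B_H)=1$, pairing against the positive Perron eigenvector shows this quantity is bounded uniformly in $\delta$. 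Hence the $H$‑dimensional Hausdorff pre‑measure of $F_A$ at scale $\delta$ stays bounded, giving $\dim_H(F_A)\le H$, and since the cover consists of a controlled number of sets all of size $\approx\delta$, the same estimate yields $\overline{\dim}_B(F_A)\le H$. This is essentially the covering scheme behind Theorem 1.1, adapted to variable ratios and to an irreducible (rather than primitive) matrix.

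For the lower bound, define a Borel measure $\mu$ on $F_A$ by assigning to each admissible cylinder $[\omega]$, $\omega = i_1\cdots i_k$, the mass
$$ \mu([\omega]) = \text{(const)}\cdot u_{i_1}\,\bigl(s_{i_1 i_2}\cdots s_{i_{k-1} i_k}\bigr)^h\, v_{i_k},$$
where $u,v$ are the positive left and right eigenvectors of $B_h$; the identities $u B_h = u$ and $B_h v = v$ make this prescription consistent under one‑step refinement, so $\mu$ extends to a probability measure supported on $F_A$. The crux is the Frostman estimate $\mu(B(x,r))\le C r^h$. Here the open set condition is used: by a Graf--Schief‑type argument it produces an open set $U$ with $f_{ij}(U)\subset U$ and images disjoint along admissible transitions, and a uniform bound $M_0$ on the number of cylinders $[\omega]$ of a given approximate size whose closures $f_\omega(\overline U)$ meet a fixed ball $B(x,r)$. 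Since $\mu([\omega])\le C(\prod \bar s_{i_l i_{l+1}})^h \le C'(\operatorname{diam} f_\omega(\mathcal K))^h$ by $s\le\bar s$ and the bi‑Lipschitz bounds, summing over these $\le M_0$ cylinders gives $\mu(B(x,r))\lesssim r^h$, and the mass distribution principle yields $\dim_H(F_A)\ge h$.

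The main obstacle is this lower‑bound Frostman estimate. Because the $f_{ij}$ are only bi‑Lipschitz and not similarities, the images $f_\omega(\mathcal K)$ are not scaled copies of $\mathcal K$, so the standard "bounded number of comparable cylinders meeting a ball" lemma must be re‑derived from the OSC using the two‑sided contraction inequalities $s_{ij}d(x,y)\le d(f_{ij}(x),f_{ij}(y))\le \bar s_{ij}d(x,y)$; one must ensure that the cut set chosen by the \emph{lower} ratios (which controls $\mu$) is compatible with the covering/packing estimates driven by the \emph{upper} ratios. A secondary technical point is that $A$ is irreducible but possibly not primitive, so $A^M>0$ is unavailable; instead one works directly with the positive Perron eigenvectors and, wherever a "every state reaches every state" property is needed, uses a fixed length $M$ within which irreducibility guarantees \emph{some} admissible connecting path, absorbing the resulting bounded factors into the constants.
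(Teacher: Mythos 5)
You should first note that the paper does not actually prove this statement: it is Roychowdhury's theorem, quoted as Theorem 1.2 and cited to [8]; what the paper proves is its generalization (Theorems 4.6, 4.7 and 5.2), so the fair comparison is with that machinery. Your outline is essentially correct but follows a genuinely different route. You argue in the classical graph-directed style: Perron--Frobenius eigenvectors of $B_h=((A)_{ij}s_{ij}^h)$ and $\bar{B}_H=((A)_{ij}\bar{s}_{ij}^H)$, eigenvector conservation along stopping-time cut sets for the upper Hausdorff/box bounds, and a Markov measure $\mu([i_1\cdots i_k])=\mathrm{const}\cdot u_{i_1}(s_{i_1i_2}\cdots s_{i_{k-1}i_k})^h v_{i_k}$ for the Frostman estimate (consistency needs only $B_hv=v$). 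The paper instead routes everything through the lower and upper topological pressure functions $P(t),\bar{P}(t)$: it identifies their zeros with the spectral-radius conditions (Remark 2), proves uniform two-sided bounds on $\sum_{\omega\in W_n}c_\omega^h$ (Proposition 4.2), builds its measure as a Banach limit of normalized cylinder sums extended by the Kolmogorov theorem (no explicit eigenvectors), gets the upper Hausdorff bound from covers by all allowable words of length $n$ and the box bound from the diameter cut set $\mathcal{U}_r$, and uses an OSC volume-comparison lemma (Proposition 4.5) in the role of your Graf--Schief-type overlap bound. Your approach is leaner for the edge-indexed recurrent IFS and gives the measure in closed form; the paper's pressure/Banach-limit formulation is what allows it to go beyond: forbidden words of arbitrary length and sofic shifts, where the relevant matrices have entries that are sums of contraction ratios and the eigenvector bookkeeping would be clumsier.

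Two corrections to your sketch. The chain $\mu([\omega])\leq C(\prod\bar{s}_{i_li_{l+1}})^h\leq C'(\operatorname{diam}f_\omega(\mathcal{K}))^h$ is wrong in the middle step: the upper ratios bound the diameter from above, $\operatorname{diam}f_\omega(\mathcal{K})\leq(\prod\bar{s})\operatorname{diam}\mathcal{K}$, so you cannot pass from the $\bar{s}$-product to the diameter. You do not need to: your measure is built from the lower ratios, and the lower Lipschitz bound gives $\prod s\leq\operatorname{diam}f_\omega(\mathcal{K})/\operatorname{diam}\mathcal{K}$, hence $\mu([\omega])\lesssim(\operatorname{diam}f_\omega(\mathcal{K}))^h$ directly. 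The same observation dissolves the ``lower versus upper ratio cut set'' worry in your last paragraph: choose the cut set by the actual diameters of the sets $f_\omega(\mathcal{K})$ (exactly the paper's $\mathcal{U}_r$ in Proposition 4.5 and Theorem 4.7); then the OSC volume count only needs $|f_\omega(\mathcal{K})|>c_{\min}r$ and the measure estimate only needs the lower ratios, so no matching between lower- and upper-ratio cut sets is required. With those repairs your argument goes through and is, in spirit, the proof of [8] rather than the paper's.
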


\medskip

\medskip
Although it was not stated so, an attractor described above in Theorems 1.1 and 1.2 can be associated with an SFT defined by a list of forbidden words, each of length 2. 
Theorem A generalizes Theorem 1.1 completely in $\mathbb{R}^n$ and partially generelizes Theorem 1.2 by allowing the subfractal to be associated with any SFT, regardless of the length of the words in the forbidden word list.  Furthermore, we extend the results to subfractals induced by a sofic subshift, which is a broader class than SFTs and, to our knowledge, is new.  In the case of Hausdorff dimension, we remove the requirement that the associated matrices must be irreducible, and hence our results include even more subfractals induced by SFTs and sofic subshifts.

\medskip

\section{Basic definitions and background}
Let $\mathcal{K} \subset \mathbb{R}^n$ be a compact subset and $E \subseteq \mathcal{K}$.
Letting $\displaystyle \overline{\mathcal{H}}^s_{\varepsilon} (E) = \inf_{\mathcal{U} \in \mathcal{O}} \sum_{U \in \mathcal{U}} (\text{diam}(U))^s$, where $\mathcal{O}$ is the collection of all open $\varepsilon$-covers of $E$ and $s \geq 0$,  
the $s$-dimensional Hausdorff outer measure is defined to be $\displaystyle \overline{\mathcal{H}}^s = \lim_{\varepsilon \to 0} \overline{\mathcal{H}}^s_{\varepsilon}$.    
Restricting the outer measure to measurable sets, one defines the $s$-dimensional Hausdorff measure, $H^s$.  The {\it Hausdorff dimension of $E$}, denoted $\dim_{\text H}(E)$, is defined as the unique value of $s$ such that:
$$ H^r(E) = \left \{
\begin{array}{lr}
0 ,& r > s \\
\infty ,& r < s .
 \end{array} \right . $$

\medskip
If $N_r(E)$ denotes the smallest number of sets of diameter $r$ that can cover $E$, the {\it lower and upper box dimensions of $E$} are defined, respectively, as [3]:
$$ \underline{\dim}_\text{B}(E)= \liminf_{r \to 0} \frac{\log N_r(E)}{-\log r} \text{ and }
\overline{\dim}_\text{B}(E) = \limsup_{r \to 0} \frac{\log N_r(E)}{-\log r}. $$
The following relationship between the fractal dimensions defined above are well-known [3]:
$$ \dim_H(E) \leq \underline{\dim}_B(E) \leq \overline{\dim}_B(E).$$

\medskip
Let $\mathcal{A} = \{1, \ldots, m\}$ be a finite alphabet.  Let $\Omega_n$ denote the collection of all words on $\mathcal{A}$ of length $n$ and $\Omega_*= \displaystyle \bigcup_{k=1}^\infty \Omega_k$ denote the collection of all finite words of any finite length. 
 Let $X$ denote the compact metric space of all infinite sequences on $\mathcal{A}$, equipped with the metric $d_X$ defined by $d_X(\omega, \tau) = \displaystyle \frac{1}{2^k}$ where $k = \min \{ i : \omega_i \neq \tau_i\}$, for all $\omega = \omega_1\omega_2 \ldots, \tau = \tau_1 \tau_2 \ldots \in X$. 
For $\omega \in \Omega_*$, let $\ell(\omega)$ denote the length of the word $\omega$.   Let $\sigma : X \to X$ denote the shift map defined by $\sigma(\omega_1 \omega_2 \ldots) = \omega_2 \omega_3 \ldots$ for all $\omega = \omega_1 \omega_2 \ldots \in X$.   We will also adopt the following notations:
\begin{align*}
\omega \tau &= \omega_1 \ldots \omega_n \tau_1 \ldots \tau_m \text{ for } \omega \in \Omega_n, \tau \in \Omega_m, \\
\vspace{.05in}
\omega^- &= \omega_1 \ldots \omega_{n-1} \text{ for } \omega \in \Omega_n,\\
\vspace{.05in}
\omega|_n &= \omega_1 \ldots \omega_n \text{ for all } \omega \in X. 
\end{align*}

\medskip
We will begin by focusing on specific subshifts, namely, subshifts of finite type (SFTs).  An SFT, say Y, is defined by a finite list of forbidden words of finite length.  A word $\tau \in \Omega_n$ is {\it forbidden} if it appears nowhere in $\omega$ for all $\omega \in Y$. 
Any word that is not forbidden is called an {\it allowable} word.
Observe that for any list of forbidden words $F = \{x_1, \ldots, x_k\}$, $x_i \in \Omega_*$ for $1 \leq i \leq k$, there exists an integer $N$ such that $F$ can be rewritten as $F = \{y_1, \ldots, y_l\}$ where $y_i \in \Omega_N$ for $1 \leq i \leq l$.  For more information on SFTs, refer to [6]. 

\medskip
Let $\omega = \omega_1 \ldots \omega_{k-1}, \xi = \xi_1 \ldots \xi_{k-1} \in \Omega_{k-1}$.  We say $\omega$ is {\it compatible with $\xi$} if $\omega_2 \ldots \omega_{k-1} = \xi_1 \ldots \xi_{k-2}$.  A compatible pair is a pair $(\omega, \xi) \in \Omega_{k-1} \times \Omega_{k-1}$, where $\omega$ is compatible with $\xi$.
Let $(\Omega_{k-1} \times \Omega_{k-1})_{\text comp}$ denote the collection of all compatible pairs $(\omega, \xi) \in \Omega_{k-1} \times \Omega_{k-1}$.  Define an operation $* : (\Omega_{k-1} \times \Omega_{k-1})_{\text comp} \to \Omega_k$ by $\omega * \xi = \omega_1 \omega_2 \ldots \omega_{k-1} \xi_{k-1} = \omega_1 \xi_1 \xi_2 \ldots \xi_{k-1}$.

\medskip
Let $X_F$ be a SFT with forbidden words $F = \{ \tau_1, \ldots, \tau_l\}$.  Without loss of generality, we can assume $\tau_i \in \Omega_k$ for all $1\leq i \leq l$.  Let $W_n(X_F)$ denote all allowable words of length $n$ from $X_F$ for $n \geq 1$.  If the subshift $X_F$ is clearly understood in context, we will typically write $W_n$.  Let $W_* = \displaystyle \bigcup_{k=1}^\infty W_k$ denote the collection of all finite allowable strings.

\medskip
 Let $N = m^{k-1}$, where $m = |\mathcal{A}|$ and $\ell(\tau_i)=k$ for all $\tau_i \in F$. 
We will construct an $N$ x $N$ adjacency matrix $A$ as follows.  Label the rows with all possible words (both allowable and forbidden) of length $k-1$, i.e. label the rows with $\{\omega_1, \ldots, \omega_N\} = \Omega_{k-1}$.  Label the correpsonding columns similarly.  Let the entry be $a_{ij} = 0$ if $\omega_i$ is not compatible with $\omega_j$ and $a_{ij} = 0$ if $\omega_i$ is compatible with $\omega_j$ but $\omega_i * \omega_j \in F$.  The entry $a_{ij} = 1$ if $\omega_i$ is compatible with $\omega_j$ and $\omega_1 * \omega_j$ is an allowable word.

\medskip
For the sake of clarity, consider the following examples.  First, consider the SFT on the alphabet $\mathcal{A} = \{1,2\}$ with forbidden word $F_1 = \{22\}$.  The associated matrix will be of the form:

$$\begin{bmatrix} 1&1 \\ 1&0 \end{bmatrix}.$$

\medskip
Next, let us consider a SFT on the same alphabet $\mathcal{A} = \{1,2\}$ but with forbidden word list $F_2 = \{ 112, 211, 222\}$.  Since each forbidden word has length 3, we will need to consider a 4 x 4 matrix since $|\Omega_2| = 4$.  We will choose the following labeling of rows: $R_1 \rightarrow 11, R_2 \rightarrow 12, R_3 \rightarrow 21, R_4 \rightarrow 22$.  The corresponding matrix will be of the form:

$$\begin{bmatrix} 1&0&0&0 \\ 0&0&1&1 \\ 0&1&0&0 \\ 0&0&1&0 \end{bmatrix}.$$

Here, the entries $a_{12} = a_{31} = a_{44} = 0$ correspond to the forbidden words 112, 211, 222, respectively.  
The entries $a_{13} = a_{14} = a_{21} = a_{22} = a_{33} = a_{34} = a_{41} = a_{42}=0$ correspond to pairs which are not compatible.  The 1's in the matrix all correspond to compatible pairs which are also allowable words.  We will use either $X_A$ or $X_F$ to denote the SFT.

\medskip
To each such $N$ x $N$ adjacency matrix, we can associate a directed graph $G_A = (V_A, E_A)$ where $V = \{v_1, v_2, \ldots, v_N\}$ and $E = \{e_{i,j}\}_{i,j = 1}^N$ where $e_{i,j}$ is an edge from $v_i$ to $v_j$ if the entry $a_{ij} = 1$ from $A$.   A directed graph $G = (V,E)$ is called {\it strongly connected} if for any two vertices $v_i, v_j \in V$, there exists a path from $v_i$ to $v_j$.  

\begin{prop} 
A matrix $A$ is irreducible iff it is associated with a graph $G_A$ which is strongly connected.
\end{prop}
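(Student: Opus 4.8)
The plan is to show that the two conditions are, after a change of language, the same statement, with the bridge being the defining property of $G_A$: there is an edge $e_{i,j}\in E_A$ from $v_i$ to $v_j$ exactly when $a_{ij}\neq 0$ (for a $(0,1)$-matrix, exactly when $a_{ij}=1$). Under this dictionary, a directed path $v_i=v_{i_1}\to v_{i_2}\to\cdots\to v_{i_m}=v_j$ in $G_A$ corresponds precisely to a finite index sequence $(i_l)_{1\le l\le m}$ along which consecutive entries $a_{i_l,i_{l+1}}$ are nonzero, which is what the irreducibility condition demands: since the entries of $A$ are nonnegative, the condition $(A)_{i_l,i_{l+1}}(A)_{i_{l+1},i_{l+2}}>0$ is equivalent to requiring every single entry $a_{i_l,i_{l+1}}$ along the sequence to be positive.

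For the forward implication, I would assume $A$ is irreducible and fix $1\le i,j\le N$. Irreducibility gives a finite sequence $i=i_1,i_2,\ldots,i_m=j$ with $a_{i_l,i_{l+1}}\neq 0$ for each $l$; by the construction of $G_A$ each such entry yields an edge $e_{i_l,i_{l+1}}$ from $v_{i_l}$ to $v_{i_{l+1}}$, and concatenating these edges produces a directed path from $v_i$ to $v_j$. Since $i$ and $j$ were arbitrary, $G_A$ is strongly connected.

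For the converse, I would assume $G_A$ is strongly connected and fix $1\le i,j\le N$. Pick a directed path $v_i=v_{i_1}\to v_{i_2}\to\cdots\to v_{i_m}=v_j$; each step is an edge $e_{i_l,i_{l+1}}\in E_A$, which by the definition of $G_A$ forces $a_{i_l,i_{l+1}}\neq 0$. Hence $(i_l)_{1\le l\le m}$ is exactly the sequence required by the irreducibility condition for the pair $(i,j)$, so $A$ is irreducible.

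There is no substantial obstacle here; the result is a routine reformulation, and the main work is simply matching the slightly cumbersome phrasing of irreducibility to the path condition via the entry/edge correspondence. The one point I would flag explicitly is the diagonal case $i=j$, where one uses the trivial length-zero path at $v_i$ (equivalently the one-term sequence $i_1=i$) so that both conditions hold vacuously and the equivalence is not broken at the diagonal.
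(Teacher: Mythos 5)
Your proposal is correct: it is the standard dictionary between nonzero entries of $A$ and edges of $G_A$, under which the index sequences witnessing irreducibility and the directed paths witnessing strong connectedness are literally the same data, and both implications follow by reading this correspondence in the two directions. The paper itself gives no proof, deferring to Lind and Marcus [6], and the argument there is the same routine translation you wrote out (your explicit handling of the diagonal case $i=j$ is a reasonable convention choice consistent with the paper's definitions).
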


For details on Proposition 2.1, see [6].
By the Perron-Frobenius Theorem, we know that if $A$ is an irreducible matrix, then $A$ has a positive eigenvector ${\bf v}_A$  corresponding to a positive eigenvalue $\lambda_A \in \mathbb{R}$ such that $|\mu| \leq \lambda_A$ where $\mu$ is any eigenvalue of $A$ [6].
For any non-negative $m$ x $m$ matrix $A$ with a positive eigenvector and corresponding positive eigenvalue $\lambda$, there exist constants $b_0, d_0 > 0$ such that
 $$b_0 \lambda^n \leq \displaystyle \sum_{i,j = 1}^m (A^n)_{ij} \leq d_0 \lambda^n.$$

\section{Subfractals associated with a Subshift}
Let $\{\mathcal{K}; f_1, \ldots f_m\}$ be the system defined in the statement of the main theorem, and let $\mathcal{F}$ denote the attractor of the HIFS.  If $\mathcal{A} = \{1, \ldots, m\}$, where each letter $i$ corresponds to the map $f_i $ for $1 \leq i \leq m$,  and $\omega = \omega_1 \ldots \omega_n \in \Omega_n$, we will use the following notation:
\begin{align*}
f_\omega &= f_{\omega_n} \circ f_{\omega_{n-1}} \circ \cdots \circ f_{\omega_1} \\ \vspace{.08 in}
c_\omega &= c_{\omega_1} c_{\omega_2} \cdots c_{\omega_n}.
\end{align*}
Define the associated coding map $\pi : X \to \mathcal{F}$ by $\pi ( \omega) = \displaystyle \lim_{n \to \infty} f_{\omega|_n}(\mathcal{K})$. 

\medskip
For each such IFS, we can define a subfractal of $\mathcal{F}$ by only considering the points associated with a word from a subshift.  Let $X_F$ be a SFT and define $\mathcal{F}_{X_F} = \{\pi(\omega) : \omega \in X_F\}$.  

As defined in section 2, fix an $N \times N$ adjacency matrix.  Let $\Omega_{k-1} = \{\tau^1, \tau^2, \ldots, \tau^N\}$, $N=m^{k-1}$.  
Define two other $N \times N$ matrices, $S_0$ and $S$, as follows:
$$S_0 = \begin{bmatrix}
c_{\tau^1}& 0 &\cdots & 0 \\
0 & c_{\tau^2} &\cdots & 0 \\
\vdots & \vdots & \ddots & \vdots \\
0& 0 & \cdots& c_{\tau^N}.
\end{bmatrix}
\text{ and }
S = \begin{bmatrix}
c_{i_1}& 0 &\cdots & 0 \\
0 & c_{i_2} &\cdots & 0 \\
\vdots & \vdots & \ddots & \vdots \\
0& 0 & \cdots& c_{i_N}
\end{bmatrix},$$

where $i_j \in \mathcal{A}$ for all $1 \leq j \leq N$ and the order of the $i_j's$ is chosen so that 
$$\sum_{i,j =1}^N (S_0 A_0 S)_{i,j} = \sum_{\omega \in \Omega_{k-1}} c_\omega,$$
with adjacency matrix $A_0$ associated with the full shift.
Similarly, we define
$$\bar{S}_0 = \begin{bmatrix}
\bar{c}_{\tau^1}& 0 &\cdots & 0 \\
0 & \bar{c}_{\tau^2} &\cdots & 0 \\
\vdots & \vdots & \ddots & \vdots \\
0& 0 & \cdots& \bar{c}_{\tau^N}
\end{bmatrix}
\text{ and }
 \bar{S} = \begin{bmatrix}
\bar{c}_{i_1}& 0 &\cdots & 0 \\
0 & \bar{c}_{i_2} &\cdots & 0 \\
\vdots & \vdots & \ddots & \vdots \\
0& 0 & \cdots& \bar{c}_{i_N}
\end{bmatrix}.
$$
For $t \in \mathbb{R}$, define
$$ S^{(t)} = \begin{bmatrix}
c_{i_1}^t& 0 &\cdots & 0 \\
0 & c_{i_2}^t &\cdots & 0 \\
\vdots & \vdots & \ddots & \vdots \\
0& 0 & \cdots& c_{i_N}^t
\end{bmatrix},
$$
and define $S_0^{(t)}$, $\bar{S}^{(t)}$, and $\bar{S}_0^{(t)}$ similarly.\\

Next, we will define a topological pressure function for calculating bounds for the fractal dimensions.  Topological pressure functions have been used to find bounds for fractal dimensions of different types of fractal classes [8].

\begin{definition} Let $X_A$ be a subshift.  The {\it  lower topological pressure function} of $\mathcal{F}_{X_A}$ is given by $P(t) = \displaystyle \lim_{n \to \infty} \frac{1}{n} \log \left( \sum_{\omega \in W_n} c_{\omega}^t \right )$.
Similarly, we define the {\it upper topological pressure function} by $\bar{P}(t) = \displaystyle \lim_{n \to \infty} \frac{1}{n} \log \left (\sum_{\omega \in W_n} \bar{c}_\omega^t \right )$.
\end{definition} 

\begin{prop} The lower and upper topological pressure functions $P(t)$ and $\bar{P}(t)$ are strictly decreasing, convex, and continuous on $\mathbb{R}. $
\end{prop}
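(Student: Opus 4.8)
The plan is to read off all three properties directly from Definition 3.2, using only $0 < c_i \le \bar c_i < 1$ together with Fekete's subadditive lemma and H\"older's inequality. Write $a_n(t) = \sum_{\omega \in W_n} c_\omega^t$, so that $P(t) = \lim_{n\to\infty} \frac1n \log a_n(t)$. The first step is to record that this limit exists and is finite (which is implicit in the definition but is needed for all the estimates below). For any subshift, if $\omega \in W_{n+m}$ then the prefix $\omega|_n$ lies in $W_n$, the subword $\omega_{n+1}\cdots\omega_{n+m}$ lies in $W_m$, and $c_\omega^t = c_{\omega|_n}^t \cdot c_{\omega_{n+1}\cdots\omega_{n+m}}^t$; since every summand is nonnegative, summing over $W_{n+m}$ and then enlarging to all pairs gives $a_{n+m}(t) \le a_n(t)\, a_m(t)$. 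Thus $n \mapsto \log a_n(t)$ is subadditive, and Fekete's lemma yields $\lim_n \frac1n \log a_n(t) = \inf_n \frac1n \log a_n(t)$. Writing $c_{\min} = \min_i c_i$, $c_{\max} = \max_i c_i$, one has $c_{\min}^{\,n} \le c_\omega \le c_{\max}^{\,n}$ and $1 \le |W_n| \le m^n$ (the subshift being nonempty), so $\frac1n \log a_n(t)$ is bounded above and below for each fixed $t$; hence $P(t) \in \mathbb{R}$ for all $t$.

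For strict monotonicity, fix $t_1 < t_2$. Since $0 < c_\omega \le c_{\max}^{\,n}$ and $t_2 - t_1 > 0$, the monotonicity of $x \mapsto x^{t_2 - t_1}$ gives $c_\omega^{t_2} = c_\omega^{t_1} c_\omega^{t_2 - t_1} \le c_{\max}^{\,n(t_2-t_1)} c_\omega^{t_1}$, hence $a_n(t_2) \le c_{\max}^{\,n(t_2-t_1)}\, a_n(t_1)$. Taking $\frac1n \log$ and letting $n \to \infty$ gives $P(t_2) \le (t_2 - t_1)\log c_{\max} + P(t_1) < P(t_1)$, because $\log c_{\max} < 0$; in fact this shows $P$ decreases at rate at least $|\log c_{\max}|$, which is a stronger quantitative statement.

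For convexity, fix $t_1, t_2$ and $\lambda \in (0,1)$, and put $t = \lambda t_1 + (1-\lambda)t_2$. Writing $c_\omega^t = (c_\omega^{t_1})^{\lambda}(c_\omega^{t_2})^{1-\lambda}$ and applying H\"older's inequality with conjugate exponents $1/\lambda$ and $1/(1-\lambda)$ to the sum over $\omega \in W_n$ gives $a_n(t) \le a_n(t_1)^{\lambda}\, a_n(t_2)^{1-\lambda}$; taking $\frac1n \log$ and passing to the limit yields $P(t) \le \lambda P(t_1) + (1-\lambda)P(t_2)$. Continuity is then automatic: a convex function that is finite-valued on all of $\mathbb{R}$ is continuous (indeed locally Lipschitz) on $\mathbb{R}$, and finiteness was established in the first step. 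The identical arguments with $\bar c_i$, $\bar c_\omega$ in place of $c_i$, $c_\omega$ throughout give the corresponding statements for $\bar P$. I expect the only point requiring care to be the first step --- justifying existence and finiteness of the defining limit via the subadditivity estimate, and keeping the bounds valid for negative $t$ (where $c_\omega^t \ge 1$ rather than $\le 1$); the monotonicity and convexity arguments are then direct applications of $c_\omega \le c_{\max}^{\,n}$ and of H\"older's inequality, respectively.
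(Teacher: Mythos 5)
Your proof is correct, and its two core steps are the same as the paper's: strict decrease is obtained from the bound $c_\omega \leq c_{max}^{\,n}$ for $\omega \in W_n$ (giving $P(t_2) \leq (t_2-t_1)\log c_{max} + P(t_1) < P(t_1)$), and convexity from H\"older's inequality applied to $c_\omega^{\lambda t_1 + (1-\lambda)t_2} = (c_\omega^{t_1})^{\lambda}(c_\omega^{t_2})^{1-\lambda}$. Where you go beyond the paper is the preliminary step: the paper's definition simply writes $P(t)$ as a limit and its proof manipulates that limit without ever showing it exists and is finite, whereas you justify this via the factorial property of the language of a subshift ($a_{n+m}(t) \leq a_n(t)a_m(t)$, since concatenation of the prefix and suffix maps $W_{n+m}$ injectively into $W_n \times W_m$ and all summands are positive) together with Fekete's lemma, plus the two-sided bounds $c_{min}^{\,n} \leq c_\omega \leq c_{max}^{\,n}$ and $1 \leq |W_n| \leq m^n$ handled separately for $t \geq 0$ and $t < 0$. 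This fills a real gap that the paper leaves implicit, and it also lets you conclude continuity from the cleaner fact that a finite-valued convex function on $\mathbb{R}$ is locally Lipschitz, rather than the paper's appeal to ``convex and strictly decreasing, thus continuous,'' where strict decrease is actually irrelevant and finiteness is the point. So: same route for the statement itself, with a worthwhile added justification of the limit's existence that the paper omits.
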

\begin{proof}
We will show the proof for $P(t)$. The proof for $\bar{P}(t)$ follows similarly.
Let $\delta > 0$.  By using the fact that $c_{\omega} \leq  c_{max}^n$ for all $\omega \in W_n$, where $c_{max} = \max_{1\leq i \leq m} \{c_i\}$, we have:  \\
\begin{align*}
 P(t + \delta) & =  \lim_{n \to \infty} \frac{1}{n} \log \left( \sum_{\omega \in W_n}  c_\omega^{t + \delta} \right) \leq \lim_{n \to \infty} \frac{1}{n} \log \left( \sum_{\omega \in W_n}  c_\omega^{t}c_{max}^{n\delta} \right) \\
& = \lim_{n \to \infty} \frac{1}{n} \log \left( c_{max}^{n\delta}\sum_{\omega \in W_n}  c_\omega^{t}\right)  = \lim_{n \to \infty} \frac{1}{n} [ n\delta \log(c_{max})] + P(t) \\
& = \delta \log(c_{max}) + P(t) < P(t),  \text{ since }  0<c_{max}<1 .
\end{align*}
Hence, $P(t)$ is strictly decreasing. 
 If $t_1, t_2 \in \mathbb{R}$ and $a_1, a_2 >0$ with $a_1+a_2 = 1$,  then, by H\"older's inequality, we have\\
\begin{align*}
P(a_1t_1 + a_2t_2)& = \lim_{n \to \infty} \frac{1}{n} \log \left( \sum_{\omega \in W_n} (c_\omega)^{a_1t_1+a_2t_2} \right) \\
&= \lim_{n \to \infty} \frac{1}{n} \log \left[ \sum_{\omega \in W_n} ((c_\omega)^{t_1})^{a_1} ((c_\omega)^{t_2})^{a_2}\right] \\
&  \leq \lim_{n \to \infty} \frac{1}{n} \log\left[\sum_{\omega \in W_n} (c_\omega)^{t_1} \right]^{a_1} \left[\sum_{\omega \in W_n} (c_\omega)^{t_2}\right]^{a_2} \\
&= a_1 P(t_1) + a_2 P(t_2) . 
\end{align*}
Hence, $P(t)$ is a convex function and strictly decreasing, and thus must be continuous.\\
\end{proof}

\begin{prop}
There is a unique value $h \in [0, \infty)$ such that $P(h) = 0$.
\end{prop}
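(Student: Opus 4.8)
The plan is to combine the strict monotonicity and continuity of $P$ established in the preceding proposition with the intermediate value theorem; the only real work is to determine the sign of $P$ at the two ends of a suitable interval. Uniqueness is immediate: if $P(h_1) = P(h_2) = 0$ with $h_1 \neq h_2$, say $h_1 < h_2$, then $P(h_1) > P(h_2)$ since $P$ is strictly decreasing, a contradiction. Hence $P$ has at most one zero on all of $\mathbb{R}$, and in particular at most one on $[0,\infty)$.

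For existence I would first note that $P(0) \geq 0$. Indeed, $W_n \neq \emptyset$ for every $n \geq 1$, so $\sum_{\omega \in W_n} c_\omega^0 = |W_n| \geq 1$, whence $\frac{1}{n}\log\left(\sum_{\omega \in W_n} c_\omega^0\right) \geq 0$ and therefore $P(0) = \lim_{n\to\infty}\frac{1}{n}\log|W_n| \geq 0$. (If $|W_n| = 1$ for all $n$, this already gives $P(0) = 0$ and hence $h = 0$.) Next I would show that $P$ is eventually negative. Setting $c_{\max} = \max_{1\leq i\leq m} c_i \in (0,1)$ and using $c_\omega \leq c_{\max}^n$ for $\omega \in W_n$ together with $|W_n| \leq m^n$, we get, for $t > 0$,
$$\sum_{\omega \in W_n} c_\omega^t \;\leq\; |W_n|\, c_{\max}^{nt} \;\leq\; m^n c_{\max}^{nt},$$
so $P(t) \leq \log m + t\log c_{\max}$. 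Since $\log c_{\max} < 0$, the right-hand side tends to $-\infty$ as $t \to \infty$; pick $t_0 > 0$ with $P(t_0) < 0$. By the preceding proposition $P$ is continuous on $[0, t_0]$ with $P(0) \geq 0$ and $P(t_0) < 0$, so the intermediate value theorem yields $h \in [0, t_0] \subset [0,\infty)$ with $P(h) = 0$. Together with uniqueness, this $h$ is the unique zero, finishing the proof.

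I do not expect a genuine obstacle here. The one point meriting a remark is that the limit defining $P(t)$ exists, so that the inequalities above actually pass to the limit: this follows from Fekete's subadditivity lemma applied to $a_n(t) = \log\sum_{\omega\in W_n} c_\omega^t$, using that the restriction map $\omega \mapsto (\omega|_n,\ \sigma^n(\omega)|_m)$ from $W_{n+m}$ into $W_n\times W_m$ is injective and $c_\omega = c_{\omega|_n}\, c_{\sigma^n(\omega)|_m}$, so that $a_{n+m}(t) \leq a_n(t) + a_m(t)$. Alternatively one could run the whole argument with $\liminf$ in place of the limit, since only continuity and the endpoint signs are used.
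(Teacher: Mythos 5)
Your proposal is correct and follows essentially the same route as the paper: show $P(0)\geq 0$ from $|W_n|\geq 1$, bound $P(t)\leq \log m + t\log c_{\max}\to-\infty$, and conclude via the monotonicity and continuity from the preceding proposition (the paper leaves the intermediate value theorem and uniqueness implicit where you spell them out). Your closing remark justifying the existence of the limit via Fekete's lemma is a sound extra point of rigor that the paper simply takes for granted, not a change of approach.
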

\begin{proof}
If $t = 0$,\\
$$ P(0) = \lim_{n \to \infty} \log \left ( \sum_{\omega \in W_n} c_\omega^0 \right ) = \lim_{n \to \infty} \log (|W_n|) \geq 0.$$
Next, we will look at the case where $t \to \infty$.\\
\begin{align*}
P(t) &= \lim_{n \to \infty} \frac{1}{n} \log \left( \sum_{\omega \in W_n} c_\omega^t \right ) \leq \lim_{n \to \infty} \frac{1}{n}\log \left ( \sum_{\omega \in W_n} c_{max}^{nt} \right ) \\
& = t \log(c_{\max}) + \lim_{n \to \infty} \frac{1}{n} \log( |W_n| ) \leq t\log(c_{max}) + \lim_{n \to \infty} \frac{1}{n} \log(m^n)\\
& = t\log(c_{max}) + \log(m).
\end{align*}
Since $0 < c_{max} < 1$, we must have $[t \log(c_{max}) + \log(m)] \to -\infty$ as $t \to \infty$, and hence $ \displaystyle \lim_{t \to \infty} P(t) = - \infty$. 
By Proposition 3.2, there exists a unique value $h$ such that $P(h)=0$.
\end{proof}
Following the same steps as in the proof above, we have:

\begin{prop} There is a unique value $H \in [0, \infty)$ such that $\bar{P}(H) = 0.$
\end{prop}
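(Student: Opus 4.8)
The plan is to transcribe the proof of Proposition 3.3 almost verbatim, replacing $c_i$, $c_\omega$, and $P$ throughout by $\bar c_i$, $\bar c_\omega$, and $\bar P$. By Proposition 3.2 the function $\bar P$ is strictly decreasing, convex, and continuous on $\mathbb{R}$, so it remains only to pin down the two boundary behaviors: the value $\bar P(0)$ and the limit of $\bar P(t)$ as $t\to\infty$. Continuity together with the intermediate value theorem then produces a root, strict monotonicity makes it unique, and $\bar P(0)\ge 0$ forces that root to lie in $[0,\infty)$.

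First I would compute $\bar P(0)=\lim_{n\to\infty}\frac{1}{n}\log\bigl(\sum_{\omega\in W_n}\bar c_\omega^{\,0}\bigr)=\lim_{n\to\infty}\frac{1}{n}\log|W_n|$. Since the subshift is nonempty, $|W_n|\ge 1$ for every $n$, so each summand is positive and $\bar P(0)\ge 0$. Next I would bound $\bar P$ for large $t$: putting $\bar c_{max}=\max_{1\le i\le m}\bar c_i$, every $\omega\in W_n$ satisfies $\bar c_\omega\le\bar c_{max}^{\,n}$, hence $\sum_{\omega\in W_n}\bar c_\omega^{\,t}\le|W_n|\,\bar c_{max}^{\,nt}\le m^n\bar c_{max}^{\,nt}$; applying $\frac{1}{n}\log(\cdot)$ and letting $n\to\infty$ yields $\bar P(t)\le t\log\bar c_{max}+\log m$, which tends to $-\infty$ as $t\to\infty$ because $0<\bar c_{max}<1$. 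Combining these facts with Proposition 3.2 gives the unique $H\in[0,\infty)$ with $\bar P(H)=0$.

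I do not expect any genuine obstacle here, as the argument runs word-for-word parallel to the proof of Proposition 3.3. The only point that deserves a moment's attention is the inequality $|W_n|\ge 1$, which is exactly what guarantees $H\ge 0$ rather than merely $H\in\mathbb{R}$; this relies on $W_n$ being the set of allowable words of a nonempty subshift.
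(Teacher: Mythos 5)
Your proposal is correct and matches the paper's intent exactly: the paper proves Proposition 3.3 for $P$ and then states that the result for $\bar{P}$ "follows the same steps," which is precisely the barred transcription you carry out (with $\bar{P}(0)\geq 0$, the bound $\bar{P}(t)\leq t\log \bar{c}_{max}+\log m \to -\infty$, and Proposition 3.2 supplying monotonicity and continuity). Your added remark that $|W_n|\geq 1$ is needed to place $H$ in $[0,\infty)$ is a fair, minor clarification rather than a departure from the paper's argument.
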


\begin{prop}
Let $h$ and $H$ be the unique values such that $P(h) = 0 = \bar{P}(H)$.  Then, $h \leq H$.
\end{prop}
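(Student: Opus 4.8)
The plan is to exploit the pointwise comparison between the two pressure functions coming from the hypothesis $c_i \le \bar c_i$. First I would note that since $0 < c_i \le \bar c_i < 1$ for every $1 \le i \le m$, multiplying over the letters of any word gives $c_\omega \le \bar c_\omega$ for all $\omega \in W_n$ and all $n$. Consequently, for every real $t \ge 0$ we have $c_\omega^{\,t} \le \bar c_\omega^{\,t}$, so summing over $W_n$, taking logarithms, dividing by $n$, and letting $n \to \infty$ yields $P(t) \le \bar P(t)$ for all $t \ge 0$. (The limits defining $P$ and $\bar P$ exist by Proposition 3.2, so this passage to the limit is legitimate.)

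Next I would invoke Propositions 3.3 and 3.4, which guarantee $h, H \in [0,\infty)$ with $P(h) = 0$ and $\bar P(H) = 0$. Since $H \ge 0$, the comparison from the previous paragraph applies at $t = H$, giving
$$ P(H) \le \bar P(H) = 0 = P(h). $$
Now I would use Proposition 3.2: $P$ is strictly decreasing on $\mathbb{R}$. From $P(H) \le P(h)$ and strict monotonicity it follows that $H \ge h$, which is the desired conclusion.

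There is essentially no obstacle here; the only point requiring a little care is ensuring the inequality $c_\omega^{\,t} \le \bar c_\omega^{\,t}$ is used only for $t \ge 0$ (for negative $t$ it reverses, since the bases are less than $1$), and this is exactly why it matters that $h$ and $H$ are known a priori to be nonnegative. Everything else is a direct application of the monotonicity and the existence/uniqueness facts already established in Propositions 3.2–3.4.
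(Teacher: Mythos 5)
Your proof is correct and takes essentially the same approach as the paper: both rest on the comparison $c_\omega \le \bar c_\omega$ (hence $P(t) \le \bar P(t)$ for $t \ge 0$) combined with strict monotonicity of a pressure function. The paper merely phrases it as a contradiction, evaluating the comparison at $t = h$ and using that $\bar P$ is strictly decreasing, whereas you argue directly at $t = H$ using that $P$ is strictly decreasing.
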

\begin{proof} Assume that $h > H$.  Then, $\bar{P} (h) < \bar{P}(H) = 0$.  
We also know that $c_\omega \leq \bar{c}_\omega$ for all $\omega \in W_n$.  Hence,
$$ 0 = P(h) = \lim_{n \to \infty} \frac{1}{n} \log \left (\sum_{\omega \in W_n} c_\omega^h \right ) \leq \lim_{n \to \infty} \frac{1}{n} \log \left ( \sum_{\omega \in W_n} \bar{c}_\omega ^h \right ) = \bar{P} (h) < 0,$$
which is a contradiction.  Hence, $h \leq H$.
\end{proof}
 
\begin{lemma} Let $X_A$ be  an SFT associated with matrix $A$, and let $\{\mathcal{K}; f_i : 1 \leq i \leq m\}$ be an HIFS.  Let $S_0$ and  $S$ be matrices associated with the subfractal $\mathcal{F}_{X_A}$, as above. Then, the associated lower and upper topological pressure functions $P(t)$ and $\bar{P}(t)$ can be written,respectively, as 
$$P(t) = \displaystyle \lim_{n \to \infty} \frac{1}{n} \log \left ( \sum_{i,j=1}^N [S_0^{(t)} (AS^{(t)})^{n-k+1}]_{i,j} \right ) \text{ and }$$
$$\bar{P}(t) = \displaystyle \lim_{n \to \infty} \frac{1}{n} \log \left ( \sum_{i,j=1}^N [\bar{S}_0^{(t)} (A\bar{S}^{(t)})^{n-k+1}]_{i,j} \right).$$
\end{lemma}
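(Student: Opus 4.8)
The plan is to reinterpret $\sum_{\omega\in W_n}c_\omega^t$ as a sum of weighted walks in the directed graph $G_A=(V_A,E_A)$ and then recognize that weighted walk sum as a sum of entries of a power of the weighted adjacency matrix $AS^{(t)}$. Throughout I would take $n\geq k-1$, which is harmless since only the tail of a sequence affects the limiting value of $P$ and $\bar P$. First I would recall the standard dictionary between allowable words and walks: a word $\omega=\omega_1\cdots\omega_n\in W_n$ determines the length-$(k-1)$ blocks $\tau(\ell)=\omega_\ell\cdots\omega_{\ell+k-2}$ for $1\leq\ell\leq n-k+2$, and $\omega$ is allowable precisely when, for each $\ell$, the pair $(\tau(\ell),\tau(\ell+1))$ is compatible with $\tau(\ell)*\tau(\ell+1)$ allowable, i.e. precisely when $\tau(\ell)\to\tau(\ell+1)$ is an edge of $G_A$. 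Conversely a walk $v_0\to v_1\to\cdots\to v_{n-k+1}$ in $G_A$ reconstructs a unique word of length $n$ by starting with the block $v_0$ and appending, at each step, the last letter of the next block; so there is a bijection between $W_n$ and the set of walks of $n-k+1$ edges in $G_A$. (For $n=k-1$ this degenerates to $W_{k-1}=\Omega_{k-1}$, since no block of length $k-1$ can contain a forbidden word of length $k$, which also makes the identity below trivially true with exponent $0$.)

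Next I would track the weights. If $\omega\in W_n$ corresponds to the walk $v_0\to\cdots\to v_{n-k+1}$, then $v_0=\omega_1\cdots\omega_{k-1}$ and the step $v_{\ell-1}\to v_\ell$ introduces exactly the letter $\omega_{k-1+\ell}$, which is the last letter of the block $v_\ell$. The ordering of the letters $i_j$ on the diagonal of $S$ is fixed by the construction (via the normalization involving $S_0A_0S$ and the full shift) precisely so that, for an edge $v_{\ell-1}\to v_\ell$ of $G_A$, one has $(AS^{(t)})_{v_{\ell-1}v_\ell}=(S^{(t)})_{v_\ell v_\ell}=c_{\omega_{k-1+\ell}}^t$. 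Combined with the telescoping
$$c_\omega^t=c_{\omega_1}^t\cdots c_{\omega_{k-1}}^t\prod_{\ell=1}^{n-k+1}c_{\omega_{k-1+\ell}}^t=(S_0^{(t)})_{v_0v_0}\prod_{\ell=1}^{n-k+1}(AS^{(t)})_{v_{\ell-1}v_\ell},$$
summing over all $\omega\in W_n$, i.e. over all walks of $n-k+1$ edges, and grouping by the endpoints $v_0=\tau^i$ and $v_{n-k+1}=\tau^j$, the usual expansion of $[(AS^{(t)})^{n-k+1}]_{ij}$ as a sum over walks gives
$$\sum_{\omega\in W_n}c_\omega^t=\sum_{i,j=1}^{N}\bigl[S_0^{(t)}(AS^{(t)})^{n-k+1}\bigr]_{ij}.$$
Applying $\frac1n\log(\cdot)$ and letting $n\to\infty$ then yields the stated formula for $P(t)$ (the limit exists by Definition 3.1, equivalently by submultiplicativity of $n\mapsto\sum_{\omega\in W_n}c_\omega^t$). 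The formula for $\bar P(t)$ is obtained verbatim after replacing $c_i$, $S_0^{(t)}$, $S^{(t)}$ by $\bar c_i$, $\bar S_0^{(t)}$, $\bar S^{(t)}$.

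The only genuinely delicate point is this weight bookkeeping: one must check carefully that the ordering of the diagonal of $S$ fixed by the full-shift normalization is exactly the ordering for which each edge of $G_A$, read through $AS^{(t)}$, carries the $t$-th power of the contraction ratio of the last letter of its terminal block, so that the per-edge factors telescope to $c_\omega^t$. Everything else — the walk/word dictionary and the walk expansion of a matrix power — is routine.
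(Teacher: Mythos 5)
Your proof is correct and is essentially the paper's argument: both establish the identity $\sum_{\omega\in W_n}c_\omega^t=\sum_{i,j=1}^N[S_0^{(t)}(AS^{(t)})^{n-k+1}]_{ij}$ by matching allowable words with weighted paths in $G_A$, the paper phrasing this as an induction on the power of $AS$ while you unroll it into the explicit word--walk bijection with the same weight bookkeeping (the diagonal entry of $S$ at a block being the contraction ratio of its last letter). The only difference is presentational, so nothing further is needed.
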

\begin{proof}
Recall that if $F$ is a list of forbidden words, all of length $k$, then $A$ is an $N$ x $N$ matrix, where $N = |\Omega_{k-1}|=m^{k-1}$. We will prove the assertion by induction.  First, the nonzero entries of $A$ correspond to the allowable words of length $k$.  Hence, by definition of $A, S_0, \text{ and } S$, we have 
$$ \sum_{i,j =1}^N [S_0 A S]_{ij} = \sum_{\omega \in W_k} c_\omega.$$
Now, assume that $\displaystyle \sum_{i,j=1}^N [S_0(AS)^n]_{ij} = \sum_{\omega \in W_{n+k-1}}c_\omega$ for some $n>1$.  The entries of $S_0(AS)^n$ consist of sums of contractive factors associated with allowable words of length $n+k-1$.  Now, consider the matrix $ S_0(AS)^n (AS)$.  
By the definition of $A$ and $S$, this multiplication will result in entries consisting of sums of contractive factors associated with allowable words of length $n+k$.  Since $S_0(AS)^n$ contains all allowable words of length $n+k-1$, then we must have $\displaystyle \sum_{i,j=1}^N [S_0(AS)^{n+1}]_{ij} = \sum_{\omega\in W_{n+k}} c_\omega.$  
Hence, 
$$P(t) = \lim_{n \to \infty} \frac{1}{n} \log \left (\sum_{\omega \in W_n} c_\omega^t \right ) = \lim_{n \to \infty} \frac{1}{n} \log \left (  \sum_{i,j=1}^N [S_0^{(t)}(AS^{(t)})^{n-(k-1)}]_{ij}\right ).$$
The proof follows similarly for the upper topological pressure function.

\end{proof}

\section{Main theorem for SFTs}
We begin with a technical lemma that will provide bounds needed for the main result.
\begin{lemma}
Let $S_0$, $A$, and $S$ be defined as in Section 3, where $A$ is an irreducible matrix.  
Then, for any $t > 0$, there exist positive constants $K, L$ such that
$$ c_{min}^{(k-1)t} K \lambda_{AS^{(t)}}^n \leq  \sum_{i,j=1}^N [S_0^{(t)} (AS^{(t)})^n]_{i,j} \leq c_{max}^{(k-1)t} L \lambda_{AS^{(t)}}^n,$$

 where $c_{min} = \displaystyle \min_{1\leq i \leq m} \{c_i\}$, $c_{max} = \displaystyle \max_{1 \leq i \leq m} \{ c_i\}$, $\lambda_{AS^{(t)}}$ is the maximal eigenvalue of $AS^{(t)}$.
\end{lemma}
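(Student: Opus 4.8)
The plan is to reduce the statement to the Perron--Frobenius estimate quoted at the end of Section 2, applied to the matrix $M := AS^{(t)}$, and then to absorb the diagonal factor $S_0^{(t)}$ into the multiplicative constants.

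First I would observe that, since $t > 0$ and each $c_i \in (0,1)$, the diagonal matrix $S^{(t)}$ has strictly positive diagonal entries, so $M = AS^{(t)}$ has exactly the same pattern of zero and nonzero entries as $A$. In particular its associated directed graph coincides with $G_A$, which is strongly connected because $A$ is irreducible (Proposition 2.1); hence $M$ is an irreducible nonnegative matrix. By the Perron--Frobenius Theorem, $M$ possesses a positive eigenvalue $\lambda_M = \lambda_{AS^{(t)}}$ (its maximal eigenvalue) together with a strictly positive eigenvector. Applying the estimate stated at the end of Section 2 to $M$, there exist constants $b_0, d_0 > 0$ (depending on $t$) with
$$ b_0\,\lambda_{AS^{(t)}}^{n} \;\le\; \sum_{i,j=1}^N (M^n)_{i,j} \;\le\; d_0\,\lambda_{AS^{(t)}}^{n}. $$

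Next I would control the effect of left-multiplying $M^n$ by $S_0^{(t)}$. Each diagonal entry of $S_0^{(t)}$ has the form $c_{\tau^j}^{\,t}$ with $\tau^j \in \Omega_{k-1}$, so $c_{\tau^j} = c_{\tau^j_1}\cdots c_{\tau^j_{k-1}}$ is a product of $k-1$ factors each lying in $[c_{min}, c_{max}]$; therefore $c_{min}^{(k-1)t} \le (S_0^{(t)})_{i,i} \le c_{max}^{(k-1)t}$ for every $i$. Since $M^n$ has nonnegative entries, writing $\sum_{i,j}[S_0^{(t)}M^n]_{i,j} = \sum_i (S_0^{(t)})_{i,i}\sum_j (M^n)_{i,j}$ gives
$$ c_{min}^{(k-1)t}\sum_{i,j=1}^N (M^n)_{i,j} \;\le\; \sum_{i,j=1}^N [S_0^{(t)}M^n]_{i,j} \;\le\; c_{max}^{(k-1)t}\sum_{i,j=1}^N (M^n)_{i,j}. $$
Combining the two displays and setting $K := b_0$, $L := d_0$ yields the claimed inequality.

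The argument is essentially bookkeeping once the right tool is identified; the only point requiring care is the first one, namely verifying that $AS^{(t)}$ inherits irreducibility from $A$ so that the Perron--Frobenius machinery — and hence the quoted two-sided bound on the entry sums of its powers — is available. This is immediate from the fact that right-multiplication by a diagonal matrix with positive diagonal does not change which entries vanish, but it is the step on which the rest of the proof rests.
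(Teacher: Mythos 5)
Your proof is correct and follows essentially the same route as the paper: sandwich the diagonal entries of $S_0^{(t)}$ between $c_{min}^{(k-1)t}$ and $c_{max}^{(k-1)t}$, then apply the Perron--Frobenius entry-sum estimate from Section 2 to $AS^{(t)}$. The only difference is that you make explicit the step the paper leaves tacit, namely that $AS^{(t)}$ inherits irreducibility from $A$ because right-multiplication by a positive diagonal matrix preserves the zero pattern; this is a worthwhile clarification but not a different argument.
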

\begin{proof}
Notice that for every non-zero entry of $S_0$, we have $c_{min}^{k-1} \leq (S_0)_{ij} \leq c_{max}^{k-1}$, $1 \leq i,j \leq N$.  Hence, by the Perron-Frobenius Theorem, we have constants $K$ and $L$ such that
\begin{center}
$ \displaystyle c_{min}^{(k-1)t} K \lambda_{AS^t}^n \leq c_{min}^{(k-1)t} \sum_{i,j=1}^N [(AS^{(t)})^n]_{i,j} \leq  \sum_{i,j=1}^N [S_0^{(t)} (AS^{(t)})^n]_{i,j}$\\
$ \leq c_{max}^{(k-1)t}  \sum_{i,j=1}^N [(AS^{(t)})^n]_{i,j} \leq c_{max}^{(k-1)t} L \lambda_{AS^{(t)}}^n$
\end{center}
\end{proof}

\begin{remark}
By Lemma 4.1, one can show that, for fixed value $t \in [0,\infty]$,
\begin{align*}
 P(t) &= \lim_{n \to \infty} \frac{1}{n} \log \left (\sum_{i,j=1}^N [S_0^{(t)}(AS^{(t)})^n]_{ij}\right )\\
 &\leq \lim_{n \to \infty} \frac{1}{n} \log (c_{max}^{(k-1)t} L \lambda_{AS^{(t)}}^n) = \log(\lambda_{AS^{(t)}}) = \log(\rho(AS^{(t)}),\end{align*}
where $\rho(AS^{(t)})$ denotes the spectral radius of $AS^{(t)}$.  Similarly, we can show that $\log(\rho(AS^{(t)})) \leq P(t)$, and hence $P(t) = \log(\rho(AS^{(t)})$.  Therefore, the unique value $h$ such that $P(h)=0$ is also the value of $h$ such that $\rho(AS^{(h)}) = 1$.  Analogously, we can show that the value $H$ such that $\bar{P}(H)=0$ is also the value of $H$ that satisfies $\rho(A\bar{S}^{(H)})=1$.

\end{remark}

\begin{prop}
Let $h$ be the unique zero of the lower topological pressure function.  There exist positive constants $K_0, L_0$ such that 
$$ K_0 \leq \sum_{\omega \in W_n} c_{\omega}^h \leq L_0,$$
for all $n \geq 1$.
\end{prop}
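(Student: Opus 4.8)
The plan is to combine the matrix representation of the pressure sum in Lemma 3.6 with the two-sided Perron--Frobenius estimate in Lemma 4.1, both evaluated at the exponent $t = h$, and then to use Remark 4.3 to pin down the relevant spectral radius. The whole point is that $h$ is precisely the exponent at which the growth rate of $\sum_{\omega \in W_n} c_\omega^t$ is zero, so that the geometric factor $\lambda_{AS^{(h)}}^n$ that otherwise governs the size of the sum collapses to $1$ and leaves only the bounded prefactors.

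Concretely, I would first apply Lemma 3.6 at $t = h$, which gives, for every $n \geq k-1$,
$$\sum_{\omega \in W_n} c_\omega^h = \sum_{i,j=1}^N \bigl[S_0^{(h)}(AS^{(h)})^{\,n-(k-1)}\bigr]_{ij}.$$
Next I would apply Lemma 4.1 with exponent $t = h$ and with the power $n-(k-1)$ in place of $n$; this is legitimate since $A$ is assumed irreducible throughout this section, and it yields
$$c_{min}^{(k-1)h}\, K\, \lambda_{AS^{(h)}}^{\,n-(k-1)} \;\leq\; \sum_{\omega \in W_n} c_\omega^h \;\leq\; c_{max}^{(k-1)h}\, L\, \lambda_{AS^{(h)}}^{\,n-(k-1)}.$$
By Remark 4.3 we have $P(h) = \log \rho(AS^{(h)}) = \log \lambda_{AS^{(h)}}$, so the defining property $P(h) = 0$ forces $\lambda_{AS^{(h)}} = 1$. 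Hence every factor $\lambda_{AS^{(h)}}^{\,n-(k-1)}$ equals $1$, and we obtain $c_{min}^{(k-1)h} K \leq \sum_{\omega \in W_n} c_\omega^h \leq c_{max}^{(k-1)h} L$ for all $n \geq k-1$, with bounds independent of $n$.

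Finally I would dispose of the finitely many short lengths $1 \leq n < k-1$: for each such $n$ the set $W_n$ is nonempty (indeed $W_n = \Omega_n$ since all forbidden words have length $k$, and in any case irreducibility of $A$ guarantees allowable words of every length via Proposition 2.1), so $0 < \sum_{\omega \in W_n} c_\omega^h < \infty$. Taking $K_0$ to be the minimum of $c_{min}^{(k-1)h} K$ together with these finitely many positive values, and $L_0$ the maximum of $c_{max}^{(k-1)h} L$ together with the same values, gives $K_0 \leq \sum_{\omega \in W_n} c_\omega^h \leq L_0$ for all $n \geq 1$. There is no genuine obstacle beyond the bookkeeping of the index shift $n \mapsto n-(k-1)$ and the separate treatment of the short words; the substantive content is entirely carried by Lemmas 3.6 and 4.1 and the identification $\rho(AS^{(h)}) = 1$.
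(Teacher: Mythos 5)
Your argument is correct, but it is organized differently from the paper's. You work directly at the exponent $t=h$: you invoke the finite-$n$ identity $\sum_{\omega \in W_n} c_\omega^h = \sum_{i,j}[S_0^{(h)}(AS^{(h)})^{n-(k-1)}]_{ij}$ (established inside the proof of Lemma 3.6, not just its limit statement), sandwich it with the Perron--Frobenius bounds of Lemma 4.1, and kill the factor $\lambda_{AS^{(h)}}^{\,n-(k-1)}$ by the identification $P(h)=\log\rho(AS^{(h)})=0$ from Remark 4.3; the finitely many short lengths are absorbed into the constants. The paper instead never evaluates the matrix bound at $h$ itself: it compares $P(s)$ with $0$ for $s<h$ and $s>h$, obtaining the lower bound $\sum_{\omega\in W_n}c_\omega^s>1$ from the submultiplicativity estimate $\sum_{\omega\in W_{np}}c_\omega^s\le\bigl(\sum_{\omega\in W_n}c_\omega^s\bigr)^p$ (which needs no irreducibility at all), and the upper bound from Lemma 4.1 at $s>h$, then passes $s\to h$. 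Your route is arguably cleaner on the upper bound, since you use Lemma 4.1 once with constants $K,L$ fixed at $t=h$, whereas the paper's limiting step $s\downarrow h$ tacitly treats the $s$-dependent constant $L=L(s)$ as stable; on the other hand, your lower bound leans on irreducibility (through Lemma 4.1 and Remark 4.3), while the paper's does not. Two small points of care: Lemma 4.1 is stated for $t>0$, so the degenerate case $h=0$ should be noted (there $S^{(0)}=I$ and the plain Perron--Frobenius bound for $A$ gives the same conclusion, consistent with how Remark 4.3 already uses the lemma on $[0,\infty]$), and your parenthetical claim $W_n=\Omega_n$ for $n<k$ is not needed --- nonemptiness of $W_n$, which irreducibility guarantees, is all your bookkeeping step requires.
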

\begin{proof}
Let $s<h$.  Then, $P(s) > P(h) = 0$.  So, we have
\begin{align*}
 0 < P(s) &=  \lim_{p \to \infty} \frac{1}{np} \log \left ( \sum_{\omega \in W_{np}} c_{\omega}^s\right) \leq \lim_{p \to \infty} \frac{1}{np} \log \left ( \sum_{\omega \in W_n} c_ {\omega}^s \right )^p\\
 &= \frac{1}{n} \log \left ( \sum_{\omega \in W_n} c_\omega^s \right ).
\end{align*}
Hence, $\displaystyle \sum_{\omega \in W_n} c_\omega^s > 1$, and it follows that $\displaystyle \sum_{\omega \in W_n} c_\omega^h \geq 1$.\\
Now, assume that $s>h$.  Then, $0 = P(h) > P(s)$.  So, by Lemma 4.1, we have
\begin{align*}
0 > P(s) &= \displaystyle \lim_{p \to \infty} \frac{1}{np} \log \left ( \sum_{\omega \in W_{np}} c_{\omega}^s\right) = \lim_{p \to \infty} \frac{1}{np} \log \left (\sum_{i,j=1}^N [S_0^{(s)} (AS^{(s)})^{np}]_{i,j} \right ) \\
&\geq \lim_{p \to \infty} \frac{1}{np} \log \left ( c_{min}^{(k-1)s} K \lambda_{AS^{(s)}}^{np} \right ) = \frac{1}{n} \log (\lambda_{AS^{(s)}}^n)\\
& \geq \frac{1}{n} \log \left (\frac{1}{L c_{max}^{(k-1)s}} \sum_{i,j=1}^N [S_0^{(s)} (AS^{(s)})^n]_{i,j} \right ) 
 = \frac{1}{n} \log \left (\frac{1}{L c_{max}^{(k-1)s}} \sum_{\omega \in W_n} c_\omega^s \right ).
\end{align*}
Hence, $\displaystyle \sum_{\omega \in W_n} c_\omega^s < L c_{max}^{(k-1)s}$, which implies that $\displaystyle \sum_{\omega \in W_n} c_\omega^h \leq L c_{max}^{(k-1)h}$.
\end{proof}

Following similar steps in the proof of Proposition 4.2, we have:
\begin{prop}
Let $H$ be the unique zero of the upper topological pressure function.  There exist positive constants $K_1, L_1$ such that 
$$K_1 \leq \sum_{\omega \in W_n} \bar{c}_\omega^H \leq L_1.$$
\end{prop}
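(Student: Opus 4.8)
The plan is to mirror the proof of Proposition~4.2 verbatim, replacing the lower contractive factors $c_i$ by the upper factors $\bar c_i$, the matrices $S_0, S, AS^{(s)}$ by $\bar S_0, \bar S, A\bar S^{(s)}$, and the lower pressure function $P$ by the upper pressure function $\bar P$. The only inputs needed are: (i) Proposition~3.4, which guarantees a unique zero $H$ of $\bar P$; (ii) Proposition~3.2 applied to $\bar P$, which gives that $\bar P$ is strictly decreasing, so that $s<H \Rightarrow \bar P(s)>0$ and $s>H \Rightarrow \bar P(s)<0$; (iii) the upper analogue of Lemma~3.6, $\bar P(t)=\lim_{n\to\infty}\frac1n\log\bigl(\sum_{i,j}[\bar S_0^{(t)}(A\bar S^{(t)})^{n-k+1}]_{ij}\bigr)$, already proved there; and (iv) the upper analogue of Lemma~4.1, namely $\bar c_{min}^{(k-1)t}K\lambda_{A\bar S^{(t)}}^n \le \sum_{i,j}[\bar S_0^{(t)}(A\bar S^{(t)})^n]_{ij}\le \bar c_{max}^{(k-1)t}L\lambda_{A\bar S^{(t)}}^n$, which follows from the same Perron--Frobenius argument since $A$ is irreducible and $\bar c_{min}^{k-1}\le(\bar S_0)_{ij}\le\bar c_{max}^{k-1}$ on nonzero entries.

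For the \emph{lower bound} $K_1\le\sum_{\omega\in W_n}\bar c_\omega^H$: first I would fix $s<H$, so $\bar P(s)>\bar P(H)=0$. Using the submultiplicativity $W_{np}\subseteq\{$concatenations of $p$ words from $W_n\}$ one gets $\sum_{\omega\in W_{np}}\bar c_\omega^s\le\bigl(\sum_{\omega\in W_n}\bar c_\omega^s\bigr)^p$ (each allowable word of length $np$ decomposes into $p$ allowable blocks of length $n$, and $\bar c$ is multiplicative under concatenation), hence $0<\bar P(s)\le\frac1n\log\bigl(\sum_{\omega\in W_n}\bar c_\omega^s\bigr)$, so $\sum_{\omega\in W_n}\bar c_\omega^s>1$. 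Since $\bar c_\omega\le1$ we have $\bar c_\omega^H\ge\bar c_\omega^s$ for $H>s$, wait---more carefully, $\bar c_\omega<1$ gives $\bar c_\omega^H\le\bar c_\omega^s$ when $H>s$, so this direction needs the matrix estimate instead; the cleaner route (exactly as in Prop.~4.2) is to simply conclude $\sum_{\omega\in W_n}\bar c_\omega^s>1$ for every $s<H$ and then let $s\uparrow H$, using continuity of $x\mapsto\sum_{\omega\in W_n}\bar c_\omega^x$ (a finite sum) to obtain $\sum_{\omega\in W_n}\bar c_\omega^H\ge1$; so $K_1=1$ works.

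For the \emph{upper bound}: fix $s>H$, so $0=\bar P(H)>\bar P(s)$. Apply Lemma~3.6 (upper version) and Lemma~4.1 (upper version) exactly as in Proposition~4.2:
\begin{align*}
0>\bar P(s)&=\lim_{p\to\infty}\frac{1}{np}\log\Bigl(\sum_{i,j=1}^N[\bar S_0^{(s)}(A\bar S^{(s)})^{np}]_{ij}\Bigr)
\ge\lim_{p\to\infty}\frac{1}{np}\log\bigl(\bar c_{min}^{(k-1)s}K\lambda_{A\bar S^{(s)}}^{np}\bigr)\\
&=\frac1n\log\lambda_{A\bar S^{(s)}}^n
\ge\frac1n\log\Bigl(\frac{1}{L\bar c_{max}^{(k-1)s}}\sum_{i,j=1}^N[\bar S_0^{(s)}(A\bar S^{(s)})^n]_{ij}\Bigr)
=\frac1n\log\Bigl(\frac{1}{L\bar c_{max}^{(k-1)s}}\sum_{\omega\in W_n}\bar c_\omega^s\Bigr).
\end{align*}
Hence $\sum_{\omega\in W_n}\bar c_\omega^s<L\bar c_{max}^{(k-1)s}$ for all $s>H$, and letting $s\downarrow H$ by continuity gives $\sum_{\omega\in W_n}\bar c_\omega^H\le L\bar c_{max}^{(k-1)H}=:L_1$, independent of $n$.

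There is no real obstacle here: this is a routine transcription of Proposition~4.2 with bars added everywhere, resting on the already-established fact (Proposition~3.2 for $\bar P$, and the remark following Lemma~4.1) that $\bar P$ is strictly decreasing with a well-defined unique zero $H$, together with the Perron--Frobenius two-sided estimate from Lemma~4.1 applied to $A\bar S^{(s)}$. The only point deserving a sentence of care is the passage $s\to H$ in each inequality, which is legitimate because for each fixed $n$ the map $x\mapsto\sum_{\omega\in W_n}\bar c_\omega^x$ is a finite sum of continuous functions, hence continuous; everything else is identical to the $P(t)$ case already written out.
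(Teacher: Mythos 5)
Your proposal is correct and is exactly the paper's intended argument: the paper proves this proposition simply by noting it follows the same steps as Proposition~4.2 with $c_\omega$, $S_0$, $S$, $P$ replaced by $\bar c_\omega$, $\bar S_0$, $\bar S$, $\bar P$, which is precisely what you carry out. Your extra remark about passing from $s<H$ (resp.\ $s>H$) to $s=H$ via continuity of the finite sum in the exponent is a sound way to justify a step the paper leaves implicit.
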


In order to show that $h$ is a lower bound for $\dim_H(\mathcal{F})$, we will utilize the uniform mass distribution principle from Falconer [4].  Hence, we must define an appropriate Borel probability measure to satisfy the principle.  
Let $h$ be the unique value such that $P(h)=0$.   Let $\omega \in \Omega_n$ and let $[\![ \omega ]\!]  =\{ \tau \in \Omega_\infty : \tau_i = \omega_i, 1 \leq i \leq n\}$ be the cylinder set corresponding to $\omega$.  We will use the fact that $c_{\omega \tau} = c_\omega c_\tau$.  Define
$$\nu_n([\![ \omega ]\!] ) =  \frac{\displaystyle \sum_{\omega\tau \in W_{n + \ell(\omega)} }c_{\omega\tau}^h}{\displaystyle \sum_{\tau \in W_{n + \ell(\omega)}} c_{\tau}^h} . $$

\medskip
For all $n \geq 1$ and any $\omega \in W_*$, we have by Proposition 4.2,
$$0\leq \frac{\sum_{\omega\tau \in W_{n+\ell(\omega)}} c_{\omega\tau}^h}{L_0}  \leq \nu_n([\![ \omega ]\!] ) \leq \frac{c_\omega^h\sum_{\tau \in W_n} c_{\tau}^h}{\sum_{\tau \in W_n+\ell(\omega)} c_{\tau}^h} \leq \frac{L_0}{K_0} c_\omega^h < \infty.$$
 Hence, for all $\omega \in W_*$, $\text{Lim}_{n \to \infty} \nu_n([\![ \omega ]\!] )$ exists, where $\text{Lim}$ denotes the Banach limit.  Let $\nu([\![ \omega ]\!] ) = \text{Lim}_{n \to \infty} \nu_n([\![ \omega ]\!] )$.

\medskip
Also, notice that
\begin{align*}
\sum_{i=1}^m \nu([\![ \omega i ]\!] ) &= \text{Lim}_{n \to \infty} \sum_{i=1}^m  \frac{\sum_{\omega i\tau \in W_{n+ \ell(\omega i)}} c_{\omega i \tau}^h}{\sum_{\tau \in W_{n + \ell(\omega i)}} c_{\tau}^h}\\
 &= \text{Lim}_{n \to \infty}  \frac{\sum_{\omega\tau \in W_{n+1 + \ell(\omega)}}c_{\omega  \tau}^h}{\sum_{\tau \in W_{n + 1 +  \ell(\omega)}} c_{\tau}^h} = \nu ([\![ \omega  ]\!] ).
\end{align*}

\medskip
Hence, by applying Kolmogorov extension theorem, we can extend $\nu$ to a unique Borel probability measure $ \gamma$ on $X_A$.
Let $\mu_h = \gamma \circ \pi^{-1}$, where $\pi$ is the coding map.  Hence,  $\mu_h$ is supported on $\mathcal{F}_{X_A}$.

\begin{cor}
There exist constants $K_0, L_0 > 0$ such that 
$$\mu_h(f_\omega(\mathcal{K})) \leq \frac{L_0}{K_0} c_\omega^h.$$
\end{cor}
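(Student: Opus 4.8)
\medskip
The plan is to push the cylinder bounds that are already encoded in $\nu$ forward through the coding map $\pi$. Essentially all the analytic work is already done; the only genuine issue is a measure-zero estimate for the overlap set created by the OSC.

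\textit{Step 1: the cylinder estimate.} The chain of inequalities displayed immediately before the corollary shows that, with $K_0,L_0$ the constants of Proposition 4.2, $0\le\nu_n([\![\omega]\!])\le\frac{L_0}{K_0}c_\omega^h$ for every $\omega\in W_*$ and every $n\ge1$. A Banach limit is positive and satisfies $\liminf_n a_n\le\text{Lim}_{n\to\infty}a_n\le\limsup_n a_n$ on bounded sequences, so it preserves two-sided bounds by constants; hence $0\le\nu([\![\omega]\!])\le\frac{L_0}{K_0}c_\omega^h$. Since $\gamma$ is the Kolmogorov extension of the consistent family $\nu$, it agrees with $\nu$ on cylinders, so $\gamma([\![\omega]\!])\le\frac{L_0}{K_0}c_\omega^h$ for all $\omega\in W_*$.

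\textit{Step 2: transfer through $\pi$.} Because $\mu_h=\gamma\circ\pi^{-1}$, it suffices to prove $\gamma\big(\pi^{-1}(f_\omega(\mathcal K))\big)\le\gamma([\![\omega]\!])$, i.e. that $\pi^{-1}(f_\omega(\mathcal K))$ overshoots the cylinder $[\![\omega]\!]$ only on a $\gamma$-null set. Suppose $\tau\in X_A$, $\pi(\tau)\in f_\omega(\mathcal K)$, and $\tau\notin[\![\omega]\!]$, and let $j\le\ell(\omega)$ be the first coordinate with $\tau_j\ne\omega_j$. Since $\pi(\tau)$ is the nested limit of the sets $f_{\tau|_n}(\mathcal K)$, it lies in $f_{\tau|_j}(\mathcal K)$, and by hypothesis also in $f_{\omega|_j}(\mathcal K)$; as the OSC propagates to $f_\alpha(U)\cap f_\beta(U)=\emptyset$ for distinct words $\alpha,\beta$ of equal length, $\pi(\tau)$ cannot lie in both $f_{\tau|_j}(U)$ and $f_{\omega|_j}(U)$, so it lands in the ``seam'' $\Delta:=\bigcup_{j\ge1}\bigcup_{\eta\in W_j}\big(f_\eta(\mathcal K)\setminus f_\eta(U)\big)$. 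Thus $\pi^{-1}(f_\omega(\mathcal K))\subseteq[\![\omega]\!]\cup\pi^{-1}(\Delta)$, and once $\gamma(\pi^{-1}(\Delta))=0$ we get $\mu_h(f_\omega(\mathcal K))=\gamma(\pi^{-1}(f_\omega(\mathcal K)))\le\gamma([\![\omega]\!])\le\frac{L_0}{K_0}c_\omega^h$.

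\textit{The main obstacle} is precisely the claim $\gamma(\pi^{-1}(\Delta))=0$ — equivalently, that $\mu_h$ charges no ``boundary cell'' $f_\eta(\mathcal K)\setminus f_\eta(U)$, the familiar fact that under the OSC the natural measure does not see the gaps. The delicate point is that this must be shown \emph{without} invoking the very estimate being proved, so one cannot simply write $\mu_h(f_\eta(\mathcal K)\setminus f_\eta(U))\le\mu_h(f_\eta(\mathcal K))$ and quote this corollary. Instead I would argue on the symbolic side: using the explicit form of $\nu_n$ together with the relations $f_i(U)\subseteq U$, the disjointness of the $f_i(U)$, and $\text{diam}\,f_\eta(\mathcal K)\to0$ (so that a point of $\Delta$ forces an ``early'' ambiguity in the code at a controlled depth), one shows that the set of $\tau\in X_A$ with $\pi(\tau)\in\Delta$ is covered by cylinders whose $\nu$-mass is summably small, hence has $\gamma$-measure zero. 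Granting this, the corollary follows from Steps 1 and 2, with the same constants $K_0,L_0$ as in Proposition 4.2.
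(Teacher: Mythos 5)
Your Step 1 is, in substance, the paper's entire proof: the paper simply writes $\mu_h(f_\omega(\mathcal{K})) = \nu([\![ \omega ]\!])$ and bounds the cylinder mass by $\frac{L_0}{K_0}c_\omega^h$ via Proposition 4.2. It never addresses the possibility that $\pi^{-1}(f_\omega(\mathcal{K}))$ is strictly larger than the cylinder $[\![ \omega ]\!]$, so the coding-overlap issue you isolate in Step 2 is a legitimate observation about the paper rather than something the paper disposes of. In that sense your write-up is more careful about what actually needs proving.

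However, as a proof of the corollary your proposal is incomplete at exactly the point you yourself flag as the main obstacle: the claim $\gamma(\pi^{-1}(\Delta))=0$ is never established. The sentence ``one shows that the set of $\tau$ with $\pi(\tau)\in\Delta$ is covered by cylinders whose $\nu$-mass is summably small'' is a plan, not an argument, and nothing in Proposition 4.2 or in the construction of $\nu$ (a Banach limit of ratios of pressure sums) obviously produces such covers: the seam pieces $f_\eta(\mathcal{K})\setminus f_\eta(U)$ are not coded by any natural family of cylinders, and for maps that are merely bi-Lipschitz contractions satisfying only the OSC the statement that the natural measure does not charge the overlaps is a genuinely nontrivial fact (even in the self-similar case it is a theorem, not a formality). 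So, granting your Steps 1--2, the corollary is reduced to an unproved and non-routine claim, which is a genuine gap. Two remarks that may help: first, the inequality you do prove, $\gamma([\![ \omega ]\!])\le\frac{L_0}{K_0}c_\omega^h$, is all that the subsequent argument really needs, since in Theorem 4.6 one has $\pi^{-1}(B(x,r)\cap\mathcal{F}_{X_A})\subseteq\bigcup_{\omega\in W_M}[\![ \omega ]\!]$ and hence $\mu_h(B(x,r))\le\sum_{\omega\in W_M}\gamma([\![ \omega ]\!])$, bypassing the seam set entirely; second, if you do want the corollary exactly as stated, the null-seam property has to be proved (or assumed, e.g.\ strong separation), and that argument would be the real content of the proof rather than a footnote to it.
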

\begin{proof}

By definition of $\mu_h$ and Proposition 4.2, we have
$$ \mu_h(f_{\omega}(\mathcal{K})) = \nu( [\![ \omega ] \!] ) =  \frac{\displaystyle \sum_{\omega \tau \in W_{n} }c_{\omega\tau}^h}{\displaystyle \sum_{\tau \in W_{n + \ell(\omega)}} c_{\tau}^h} \leq \frac{ \displaystyle c_\omega^h \sum_{\tau \in W_n} c_\tau^h}{\displaystyle \sum_{\tau \in W_{n + |\omega|}} c_{\tau}^h} \leq c_\omega^h \frac{L_0}{K_0}.$$
\end{proof}

\begin{prop}
For $0<r<1$ and $x \in \mathcal{F}$, the ball $B(x,r)$ intersects at most $M$ elements of $\mathcal{U}_r = \{ f_\omega(\mathcal{K}) : |f_\omega(\mathcal{K})| \leq r < |f_{\omega^-}(\mathcal{K})| \}$, where $M$ is finite and independent of $r$.
\end{prop}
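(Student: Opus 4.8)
The plan is to run the usual packing argument built on the open set condition. Write $W_r$ for the (finite) set of words $\omega$ with $f_\omega(\mathcal{K}) \in \mathcal{U}_r$, i.e.\ $|f_\omega(\mathcal{K})| \le r < |f_{\omega^-}(\mathcal{K})|$. First I would check that $W_r$ is an antichain for the prefix order: if $\omega$ were a proper prefix of $\omega'$, then $\ell(\omega) \le \ell((\omega')^-)$, and since the cylinder images are nested along branches ($f_{\omega\rho}(\mathcal{K}) \subseteq f_\omega(\mathcal{K})$ for all words $\rho$, as is implicit in the definition of the coding map $\pi$) we would get $|f_{(\omega')^-}(\mathcal{K})| \le |f_\omega(\mathcal{K})| \le r$, contradicting $\omega' \in W_r$. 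Let $U$ be the open set given by the OSC; then $\mathcal{L}^n(U) > 0$, where $\mathcal{L}^n$ denotes Lebesgue measure on $\mathbb{R}^n$. Because $f_i(U) \subseteq U$ and $f_i(U) \cap f_j(U) = \emptyset$ for $i \ne j$, a straightforward induction on word length shows $f_\omega(U) \cap f_{\omega'}(U) = \emptyset$ for incomparable words $\omega, \omega'$; hence the family $\{ f_\omega(U) : \omega \in W_r\}$ is pairwise disjoint.

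Now fix $x \in \mathcal{F}$ and $0 < r < 1$, and let $\omega \in W_r$ with $f_\omega(\mathcal{K}) \cap B(x,r) \ne \emptyset$. Since $\mathrm{diam}\, f_\omega(\mathcal{K}) \le r$, the set $f_\omega(\mathcal{K})$ — and therefore $f_\omega(U)$, as $U \subseteq \mathcal{K}$ — is contained in $B(x,2r)$. Next, $f_\omega$ is bi-Lipschitz with lower constant $c_\omega$, so $f_\omega^{-1}$ restricted to $f_\omega(U)$ is $c_\omega^{-1}$-Lipschitz and therefore $\mathcal{L}^n(f_\omega(U)) \ge c_\omega^n\,\mathcal{L}^n(U)$. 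The remaining ingredient is a lower bound $c_\omega \ge \kappa r$ valid for every $\omega \in W_r$, with $\kappa > 0$ depending only on the IFS. Granting this, each $f_\omega(U)$ with $\omega \in W_r$ meeting $B(x,r)$ has $\mathcal{L}^n$-measure at least $(\kappa r)^n \mathcal{L}^n(U)$, and disjointness inside $B(x,2r)$ yields
\[
\#\{\omega \in W_r : f_\omega(\mathcal{K}) \cap B(x,r) \ne \emptyset\} \;\le\; \frac{\mathcal{L}^n(B(x,2r))}{(\kappa r)^n\,\mathcal{L}^n(U)} \;=\; \frac{\nu_n\, 2^n}{\kappa^n\,\mathcal{L}^n(U)} \;=:\; M,
\]
with $\nu_n$ the volume of the unit ball in $\mathbb{R}^n$; this $M$ is finite and independent of $r$ and $x$, which is the assertion.

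The step I expect to be the main obstacle is the uniform estimate $c_\omega \ge \kappa r$ for $\omega \in W_r$. The membership condition already pins the \emph{diameters}: writing $\omega = \omega^- i$, from $|f_\omega(\mathcal{K})| = |f_i(f_{\omega^-}(\mathcal{K}))| \ge c_i\,|f_{\omega^-}(\mathcal{K})| \ge c_{\min}\,|f_{\omega^-}(\mathcal{K})| > c_{\min} r$ together with $|f_\omega(\mathcal{K})| \le r$ we see that every piece of $\mathcal{U}_r$ has diameter in $(c_{\min} r,\, r]$. It then remains to relate $|f_\omega(\mathcal{K})|$ to the product $c_\omega = c_{\omega_1}\cdots c_{\omega_n}$ of the lower contraction constants along $\omega$. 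When the maps $f_i$ are similarities one has $|f_\omega(\mathcal{K})| = c_\omega |\mathcal{K}|$ and the bound is immediate; in the general hyperbolic case this is the delicate point, and it is precisely here that the two-sided bounds $0 < c_i \le \bar c_i < 1$ must be invoked to keep the distortion accumulated along $\omega$ under control (otherwise $\mathcal{L}^n(f_\omega(U))$ could be far smaller than a fixed multiple of $r^n$). An equivalent route, if one prefers to avoid Lebesgue measure, is to note that $U$ contains a ball $B(z_0,\rho_0)$ and, via invariance of domain, $f_\omega(B(z_0,\rho_0)) \supseteq B(f_\omega(z_0),\, c_\omega \rho_0)$, which reduces once more to the same lower bound on $c_\omega$.
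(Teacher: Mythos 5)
Your outline is the same packing-by-volume argument via the OSC that the paper itself uses (pieces of $\mathcal{U}_r$ meeting $B(x,r)$ are trapped in a ball of radius comparable to $r$, their open images are pairwise disjoint, and one counts by Lebesgue measure), and the steps you do carry out — the antichain property of $W_r$, disjointness of the $f_\omega(U)$, containment in $B(x,2r)$, and the distortion bound $\mathcal{L}^n(f_\omega(U)) \ge c_\omega^n\,\mathcal{L}^n(U)$ — are all correct. The genuine gap is the step you yourself flag and then leave open: the uniform estimate $c_\omega \ge \kappa r$ for $\omega \in W_r$. This is not a routine verification, and it cannot be derived from the hypotheses in force here. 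Membership in $\mathcal{U}_r$ controls diameters, and diameters are governed by the \emph{upper} constants: from $r < |f_{\omega^-}(\mathcal{K})| \le \bar c_{\max}^{\,\ell(\omega)-1}|\mathcal{K}|$ (with $\bar c_{\max}=\max_i \bar c_i$) one only gets $\ell(\omega) \lesssim \log(1/r)/\log(1/\bar c_{\max})$, hence merely $c_\omega \ge c_{\min}^{\ell(\omega)} \approx r^{\log c_{\min}/\log \bar c_{\max}}$, an exponent that is $\ge 1$ and strictly greater than $1$ whenever $c_{\min} < \bar c_{\max}$. Feeding that into your volume count gives $M \lesssim r^{\,n(1-\log c_{\min}/\log \bar c_{\max})}$, which is not bounded as $r\to 0$. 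The obstruction is real, not an artifact of the method: for the self-affine system $f_{ij}(x,y) = \bigl((x+i)/p,\,(y+j)/q\bigr)$ with $q>p$ on the unit square (OSC with the open square, $c_{ij}=1/q$, $\bar c_{ij}=1/p$), the elements of $\mathcal{U}_r$ are rectangles of size roughly $r \times r^{\log q/\log p}$, and a ball of radius $r$ meets on the order of $(q/p)^k \approx r^{1-\log q/\log p} \to \infty$ of them. So no argument using only the constants $c_i \le \bar c_i$ can supply your missing inequality; some conformality or bounded-distortion input relating $|f_\omega(\mathcal{K})|$ (or the inradius of $f_\omega(U)$) to $c_\omega$ is indispensable.

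For comparison, the paper's proof follows exactly your skeleton (it uses $B(x,3r)$ and a ball $B_a\subset\mathcal{K}$ furnished by the OSC) and at the crux passes from the diameter bound $|f_\omega(\mathcal{K})| > c_{\min}r$ directly to $M\cdot m(B(x,ac_{\min}r)) \le m(B(x,3r))$, i.e.\ it treats each disjoint set $f_\omega(B_a)$ as if it contained a ball of radius proportional to $a\,|f_\omega(\mathcal{K})|$. That is precisely the comparability you identified as the delicate point; the paper asserts it rather than proves it, and it is valid for similarities (where $|f_\omega(\mathcal{K})| = c_\omega|\mathcal{K}|$) or conformal maps with bounded distortion, but not for general hyperbolic IFS as defined here. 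So you located the correct obstacle, but since neither $c_\omega \ge \kappa r$ nor any substitute for it is established, your argument does not yet prove the proposition; closing it requires adding such a hypothesis (or an argument exploiting more than the two-sided Lipschitz constants), not just more care with the counting.
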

\begin{proof}
Let $0<r<1$ and $x \in \mathcal{F}$.  Let $W_r = \{\omega\in W_* : f_\omega(\mathcal{K}) \cap B(x,r) \neq \emptyset, f_\omega(\mathcal{K}) \in \mathcal{U}_r\}$ and $|W_r|=M$.
Let $y \in B(x,r)$ and $z \in f_\omega(\mathcal{K})$ where $\omega \in W_r$.  Notice that 
$$d(y,z) \leq |B(x,r)| + |f_\omega(\mathcal{K})| \leq 3r.$$
Hence, $\{f_\omega(\mathcal{K}) : \omega \in W_r\} \subset B(x,3r)$.  For any $f_\omega(\mathcal{K}) \in \mathcal{U}_r$, we have
$$|f_\omega(\mathcal{K})| \geq  c_{min} |f_{\omega^-}(\mathcal{K})| > c_{min} r.$$
Due to the open set condition, there exists a ball $B_a$ of radius $a>0$ such that $B_a \subset \mathcal{K}$ and $f_\omega(B_a) \cap f_\tau(B_a) = \emptyset$ for $\omega, \tau \in W_r$. 
For each $\omega \in W_r$, we have $f_\omega(B_a) \subset f_\omega(\mathcal{K})$.
Let $m$ denote Lebesgue measure on $\mathcal{K}$. Since the balls are disjoint and contained in $B(x,3r)$, we have 
$$\sum_{\omega \in W_r} m(f_\omega(B_a)) \leq m(B(x,3r)).$$
Using the fact that $|f_\omega(\mathcal{K})| > c_{min}r$, we have
$$M \cdot m(B(x, ac_{min}r)) \leq m(B(x,3r).$$
Hence, $\displaystyle M \leq \frac{m(B(x,3r)}{m(B(x, ac_{min}r))}$.
 Since the ratio compares concentric balls, each with a radius equal to a constant multiple of $r$, we can let $M \leq \left  \lceil {\frac{m(B(x,3r))}{m(B(x,c_{min}r))}} \right \rceil< \infty$, which satisfies the assertion of the proposition.

\end{proof}

\begin{theo}
Let $h,H$ be the unique values such that $P(h) =0 = \bar{P}(H)$.  Then, $h \leq \dim_H(\mathcal{F}) \leq  H$.
\end{theo}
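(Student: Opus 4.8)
The plan is to prove the two inequalities separately; essentially all of the analytic input has already been assembled, so what remains is to combine it correctly.

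\textbf{Lower bound, $h\le\dim_H(\mathcal F_{X_A})$.} I would apply the uniform mass distribution principle from Falconer [4] to the Borel probability measure $\mu_h=\gamma\circ\pi^{-1}$. Since $\pi(X_A)=\mathcal F_{X_A}$ we have $\mu_h(\mathcal F_{X_A})=1$, so it remains to produce a constant $C$ with $\mu_h(U)\le C\,|U|^{h}$ for every set $U$ of sufficiently small diameter. Given such $U$ with $|U|=r$, we may assume $U\cap\mathcal F_{X_A}\neq\emptyset$, pick $x$ in the intersection, and note $U\subseteq\overline B(x,r)\subseteq B(x,2r)$, so it is enough to bound $\mu_h(B(x,2r))$. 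Write $\mathcal U_{\rho}=\{f_\omega(\mathcal K):\omega\in W_*,\ |f_\omega(\mathcal K)|\le\rho<|f_{\omega^-}(\mathcal K)|\}$; this family covers $\mathcal F_{X_A}$ because along any $\omega\in X_A$ the diameters satisfy $|f_{\omega|_n}(\mathcal K)|\le\bar{c}_{max}^{\,n}\,|\mathcal K|\to0$ (here $\bar{c}_{max}=\max_i\bar{c}_i<1$), so a least such $n$ exists. Then $\mu_h(B(x,2r))$ is bounded by the sum of $\mu_h(f_\omega(\mathcal K))$ over those $f_\omega(\mathcal K)\in\mathcal U_{2r}$ that meet $B(x,2r)$; by Proposition~4.5 there are at most $M$ of them, by Corollary~4.4 each contributes at most $\tfrac{L_0}{K_0}c_\omega^{h}$, and from $c_\omega\,|\mathcal K|\le|f_\omega(\mathcal K)|\le2r$ we get $c_\omega^{h}\le(2r/|\mathcal K|)^{h}$. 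Hence $\mu_h(U)\le\mu_h(B(x,2r))\le M\tfrac{L_0}{K_0}(2/|\mathcal K|)^{h}\,|U|^{h}$, with the constant independent of $U$, and the mass distribution principle gives $\mathcal H^{h}(\mathcal F_{X_A})\ge1/C>0$, i.e.\ $\dim_H(\mathcal F_{X_A})\ge h$.

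\textbf{Upper bound, $\dim_H(\mathcal F_{X_A})\le H$.} Here I would cover $\mathcal F_{X_A}$, for each $n$, by $\{f_\omega(\mathcal K):\omega\in W_n\}$, which has mesh at most $\bar{c}_{max}^{\,n}\,|\mathcal K|\to0$. For any $s>H$, using $|f_\omega(\mathcal K)|\le\bar c_\omega\,|\mathcal K|$ and $\bar c_\omega^{\,s}\le\bar{c}_{max}^{\,n(s-H)}\bar c_\omega^{\,H}$ together with Proposition~4.3,
$$\sum_{\omega\in W_n}|f_\omega(\mathcal K)|^{s}\ \le\ |\mathcal K|^{s}\,\bar{c}_{max}^{\,n(s-H)}\sum_{\omega\in W_n}\bar c_\omega^{\,H}\ \le\ |\mathcal K|^{s}\,L_1\,\bar{c}_{max}^{\,n(s-H)}\ \longrightarrow\ 0$$
as $n\to\infty$, since $0<\bar{c}_{max}<1$. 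Thus $\mathcal H^{s}(\mathcal F_{X_A})=0$ for every $s>H$, so $\dim_H(\mathcal F_{X_A})\le H$; together with $h\le H$ from Proposition~3.5 this finishes the proof.

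I do not expect a genuine obstacle, since Theorem~4.6 is really an assembly of the preceding results; the places that need care are all in the lower bound --- verifying that $\mathcal U_r$ actually covers the subfractal, checking that the passage from balls $B(x,r)$ to arbitrary test sets $U$ (as the mass distribution principle requires) costs only a harmless factor of $2^{h}$, and making sure the multiplicity constant $M$ from Proposition~4.5 and the constants $K_0,L_0$ from Corollary~4.4 are genuinely independent of $r$ --- together with the routine check, for the upper bound, that the covers used have diameters tending to $0$ so that the estimate on $\mathcal H^{s}$ is legitimate.
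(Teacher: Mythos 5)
Your proposal is correct and follows essentially the same route as the paper: the lower bound via the mass distribution principle applied to $\mu_h$ using Proposition~4.5, Corollary~4.4, and the estimate $c_\omega\le |U|/|\mathcal K|$ for elements of $\mathcal U_r$, and the upper bound via the covers $\{f_\omega(\mathcal K):\omega\in W_n\}$ together with Proposition~4.3. The only cosmetic difference is that you show $\mathcal H^{s}(\mathcal F_{X_A})=0$ for every $s>H$, while the paper concludes directly from $\mathcal H^{H}(\mathcal F_{X_A})<\infty$; both yield $\dim_H(\mathcal F_{X_A})\le H$.
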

\begin{proof}
Let $\mathcal{U}_n = \{f_{\omega}(\mathcal{K}) : \omega \in W_n\}$.  Notice that $\mathcal{U}_n$ is a cover for all $n \geq 1$.  Hence, by Proposition 4.3, we have
\begin{align*}
\mathcal{H}^H(\mathcal{F}) &= \lim_{\varepsilon \to 0} \inf_{\mathcal{E}} \sum_{E \in \mathcal{E}} |E|^H \leq \lim_{n \to \infty} \sum_{\omega \in W_n} |f_\omega(\mathcal{K})|^H\\
 &\leq \lim_{n \to \infty} \sum_{\omega \in W_n} |\mathcal{K}|^H \bar{c}_\omega^H \leq |\mathcal{K}|^H \cdot L_1 < \infty.
\end{align*}
Thus, $\dim_h(\mathcal{F}) \leq H$.
Let $r>0$ and $B(x,r)$ be a ball centered at $x \in \mathcal{F}$.  By Proposition 4.5, $B(x,r)$ intersects at most $M$ elements of the cover $\mathcal{U}_r$. 
Let $\mathcal{U}_M$ denote the subset of $\mathcal{U}_r$ consisting of all elements that intersect $B(x,r)$ and $W_M$ denote all allowable words associated with an element of $\mathcal{U}_M$.  By Corollary 4.4, we have
\begin{align*}
 \frac{\mu_h(B(x,r))}{r^h} &\leq \frac{ \sum_{f_\omega(\mathcal{K}) \in \mathcal{U}_M} \mu_h(f_\omega(\mathcal{K}))}{r^h} \leq \frac{ \sum_{\omega \in W_M} \frac{L_0}{K_0} c_\omega^h}{r^h} \\
&\leq \frac{M \frac{L_0}{K_0} |\mathcal{K}|^{-h} r^h}{r^h} = M\frac{L_0}{K_0} |\mathcal{K}|^{-h}.
\end{align*}
Hence, $\displaystyle \limsup_{r \to 0} \frac{\mu_h(B(x,r))}{r^h} \leq M\frac{L_0}{K_0} |\mathcal{K}|^{-h} < \infty$.  By the uniform mass distribution principle [4], we have $\mathcal{H}^h(\mathcal{F}) \geq \displaystyle \frac{ M\frac{L_0}{K_0} |\mathcal{K}|^{-h}}{\mu_h(\mathcal{F})} >0$.  Thus, $\dim_H(\mathcal{F}) \geq h$.
\end{proof}

\begin{theo}Let $h,H$ be the unique values such that $P(h)=0=\bar{P}(H)$.  Then, $h \leq \overline{\dim}_B(\mathcal{F}) \leq H$.
\end{theo}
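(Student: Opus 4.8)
The lower bound needs no new argument: Theorem 4.6 gives $\dim_H(\mathcal F)\ge h$, and the general inequality $\dim_H(E)\le\underline{\dim}_B(E)\le\overline{\dim}_B(E)$ recorded in Section 2 yields $\overline{\dim}_B(\mathcal F)\ge h$ at once. So the whole task is to show $\overline{\dim}_B(\mathcal F)\le H$, and the plan is to exhibit, for each small $r$, an efficient cover of $\mathcal F$ by sets of diameter at most $r$ and to count its elements.

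The natural cover is $\mathcal U_r=\{f_\omega(\mathcal K):|f_\omega(\mathcal K)|\le r<|f_{\omega^-}(\mathcal K)|\}$ from Proposition 4.5. As in the proof of Theorem 4.6 it covers $\mathcal F$: for $x=\pi(\omega)\in\mathcal F$ there is a unique $n$ with $|f_{\omega|_n}(\mathcal K)|\le r<|f_{\omega|_{n-1}}(\mathcal K)|$, and then $\omega|_n\in W_n$ with $x\in f_{\omega|_n}(\mathcal K)\in\mathcal U_r$; moreover every element has diameter at most $r$, so $N_r(\mathcal F)\le|\mathcal U_r|$. Next, for $\omega\in\mathcal U_r$ the inequality $|f_{\omega^-}(\mathcal K)|>r$ forces $\bar c_{\omega^-}|\mathcal K|\ge|f_{\omega^-}(\mathcal K)|>r$, hence $\bar c_\omega=\bar c_{\omega^-}\bar c_{\omega_n}\ge(\bar c_{min}/|\mathcal K|)\,r$, where $\bar c_{min}=\min_{1\le i\le m}\bar c_i>0$. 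Therefore
$$|\mathcal U_r|\,\Bigl(\frac{\bar c_{min}}{|\mathcal K|}\Bigr)^{\!H}r^{H}\le\sum_{\omega\in\mathcal U_r}\bar c_\omega^{H},$$
and the proof reduces to one inequality: there is a constant $C$, independent of $r$, with $\sum_{\omega\in\mathcal U_r}\bar c_\omega^{H}\le C$.

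To obtain that inequality I would use that $\mathcal U_r$ is an antichain of allowable words (if $\omega$ were a proper prefix of $\omega'$, both in $\mathcal U_r$, then the cylinder $f_{(\omega')^-}(\mathcal K)\subseteq f_\omega(\mathcal K)$ would give $|f_{(\omega')^-}(\mathcal K)|\le|f_\omega(\mathcal K)|\le r$, contradicting $\omega'\in\mathcal U_r$). Put $M=\max\{\ell(\omega):\omega\in\mathcal U_r\}$, which is finite because $|f_{\omega^-}(\mathcal K)|>r$ bounds $\ell(\omega)$ from above via $\bar c_{\omega^-}>r/|\mathcal K|$; also $c_\omega\le r/|\mathcal K|$ forces $\ell(\omega)\ge k-1$ once $r$ is small. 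Extend each $\omega\in\mathcal U_r$ to all allowable words of length $M$; by the antichain property these extension families are pairwise disjoint subsets of $W_M$, so Proposition 4.3 gives
$$\sum_{\omega\in\mathcal U_r}\bar c_\omega^{H}\sum_{\eta:\,\omega\eta\in W_M}\bar c_\eta^{H}\le\sum_{\xi\in W_M}\bar c_\xi^{H}\le L_1 .$$
The crux, and the main obstacle, is a uniform lower bound on the inner sum: $\sum_{\eta:\,\omega\eta\in W_M}\bar c_\eta^{H}\ge\kappa>0$ with $\kappa$ independent of $\omega$ and of $M-\ell(\omega)$. Since the subshift is an SFT with forbidden words of length $k$, the allowable continuations of $\omega$ of length $p$ are in bijection with the allowable words of length $k-1+p$ beginning with the terminal $(k-1)$-block $\tau^{i_0}$ of $\omega$, and unwinding the encoding of Lemma 3.6 this sum equals $\sum_j[(A\bar S^{(H)})^{p}]_{i_0,j}$. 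Now $A\bar S^{(H)}$ is irreducible (it has the zero-pattern of $A$) with spectral radius $1$ by Remark 4.2, so by Perron--Frobenius it has a positive eigenvector $v$ with $A\bar S^{(H)}v=v$; writing $\mathbf 1$ for the all-ones vector, $v/\|v\|_\infty\le\mathbf 1$ entrywise, so $(A\bar S^{(H)})^{p}\mathbf 1\ge(A\bar S^{(H)})^{p}(v/\|v\|_\infty)=v/\|v\|_\infty$ for every $p\ge0$, whence every row sum of every power of $A\bar S^{(H)}$ is at least $\kappa:=(\min_i v_i)/\|v\|_\infty>0$. This argument uses no primitivity, which matters since an irreducible matrix need not be primitive. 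Combining, $\sum_{\omega\in\mathcal U_r}\bar c_\omega^{H}\le L_1/\kappa=:C$.

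Finally, assembling the estimates, $N_r(\mathcal F)\le|\mathcal U_r|\le C\,(|\mathcal K|/\bar c_{min})^{H}r^{-H}$, so
$$\overline{\dim}_B(\mathcal F)=\limsup_{r\to0}\frac{\log N_r(\mathcal F)}{-\log r}\le\limsup_{r\to0}\frac{\log\!\bigl(C(|\mathcal K|/\bar c_{min})^{H}\bigr)+H\log(1/r)}{\log(1/r)}=H ,$$
which together with $\overline{\dim}_B(\mathcal F)\ge h$ proves the theorem. Beyond the uniform continuation bound $\kappa$, every step is the same bookkeeping already used for Proposition 4.3 and Theorem 4.6.
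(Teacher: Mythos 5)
Your proof is correct, and the lower bound is handled exactly as in the paper: Theorem 4.6 together with $\dim_H(\mathcal{F})\leq\underline{\dim}_B(\mathcal{F})\leq\overline{\dim}_B(\mathcal{F})$. For the upper bound you and the paper both start from the stopping-time cover $\mathcal{U}_r$, both use $N_r(\mathcal{F})\leq|\mathcal{U}_r|$ together with the fact that each $\omega$ with $f_\omega(\mathcal{K})\in\mathcal{U}_r$ has $\bar{c}_\omega$ (equivalently the diameter) bounded below by a constant times $r$, and both ultimately appeal to the uniform bound $\sum_{\xi\in W_n}\bar{c}_\xi^{H}\leq L_1$ of Proposition 4.3; but the key counting step is genuinely different. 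The paper bounds $\sum_{\mathcal{U}_r}|f_\omega(\mathcal{K})|^{H}$ by comparing $\mathcal{U}_r$ with the single-level family $\mathcal{O}_k=\{f_\omega(\mathcal{K}):\omega\in W_k\}$, $k$ the minimal word length occurring in $\mathcal{U}_r$, asserting $\sum_{\mathcal{U}_r}|f_\omega(\mathcal{K})|^{H}\leq\sum_{\mathcal{O}_k}|f_\omega(\mathcal{K})|^{H}\leq|\mathcal{K}|^{H}L_1$ from a containment of unions; you instead push the antichain $\mathcal{U}_r$ up to the maximal level $M$ and pay for it with a uniform lower bound $\kappa>0$ on continuation sums, extracted from the positive Perron eigenvector of the irreducible matrix $A\bar{S}^{(H)}$ of spectral radius $1$ (Remark 4.2). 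Your route costs an extra spectral lemma, plus the observation that allowable words are extendable (true here because $A$ is irreducible), but it buys robustness: the paper's comparison, as literally justified, is delicate because several elements of $\mathcal{U}_r$ may sit inside a single element of $\mathcal{O}_k$, whereas your prefix-disjoint extension into $W_M$ combined with $\sum_{\xi\in W_M}\bar{c}_\xi^{H}\leq L_1$ yields the clean uniform bound $\sum_{\mathcal{U}_r}\bar{c}_\omega^{H}\leq L_1/\kappa$ with no such ambiguity. The only cosmetic caveat is the paper's composition convention $f_\omega=f_{\omega_n}\circ\cdots\circ f_{\omega_1}$, under which the set inclusion you invoke for the antichain property should be replaced by prefix monotonicity of diameters, $|f_{(\omega')^{-}}(\mathcal{K})|\leq|f_\omega(\mathcal{K})|$ whenever $\omega$ is a prefix of $(\omega')^{-}$, which follows from multiplying contraction ratios and is all your argument actually needs.
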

\begin{proof}
The following relationship between Hausdorff and box dimensions is well-known:
$$\dim_H(\mathcal{F}) \leq \underline{\dim}_B(\mathcal{F}) \leq \overline{\dim}_B(\mathcal{F}).$$

Hence, it suffices to show that $\overline{\dim}_B(\mathcal{F}) \leq H$. 
Let $\mathcal{U}_r = \{f_\omega(\mathcal{K}) : |f_\omega(\mathcal{K})| \leq r < |f_{\omega^-}(\mathcal{K})|\}$, $k = \min \{ |\omega| : f_\omega(\mathcal{K}) \in \mathcal{U}_r\}$, and $\mathcal{O}_k = \{ f_\omega(\mathcal{K}) : \omega \in W_k\}$.  
Notice that $\displaystyle \bigcup_{f_\omega(\mathcal{K}) \in \mathcal{U}_r} f_\omega(\mathcal{K}) \subseteq \bigcup_{f_\omega(\mathcal{K}) \in\mathcal{O}_k} f_\omega(\mathcal{K})$.  
Hence, by Proposition 4.4, we have
$$ \sum_{f_\omega(\mathcal{K}) \in \mathcal{U}_r} |f_\omega(\mathcal{K})|^H \leq \sum_{f_\omega(\mathcal{K}) \in \mathcal{O}_k} |f_\omega(\mathcal{K})|^H \leq |\mathcal{K}|^H \sum_{\omega \in W_k} \bar{c}_\omega^H \leq |\mathcal{K}|^H L_1.$$

Also, for $f_\omega(\mathcal{K}) \in \mathcal{U}_r$,
$$|f_\omega(\mathcal{K})| \geq  |f_{\omega^-}(\mathcal{K})|\cdot c_{min} > r c_{min}.$$

Let $N_r(\mathcal{F})$ denote the smallest number of sets of diameter at most $r$ which form a cover of $\mathcal{F}$.  Then,
$$(rc_{min})^H N_r(\mathcal{F}) \leq |f_\omega(\mathcal{K})|^H N_r(\mathcal{F}) \leq \sum_{f_\omega(\mathcal{K}) \in \mathcal{U}_r} |f_\omega(\mathcal{K})|^H \leq |\mathcal{K}|^H L_1.$$

Hence, $N_r(\mathcal{F}) \leq (rc_{min})^{-H}|\mathcal{K}|^H L_1$, and thus

$$ \frac{\log(N_r(\mathcal{F}))}{-\log(r)} \leq \frac{\log(L_1|\mathcal{K}|^H) - H \log(rc_{min})}{-\log(r)} = \frac{\log(L_1|\mathcal{K}|^H)}{-\log(r)} + \frac{H \log(c_{min})}{\log(r)} + H.$$

By the definition of upper box dimension, we have
$$ \overline{\dim}_B(\mathcal{F}) =  \limsup_{r \to 0} \frac{\log(N_r(\mathcal{F}))}{-\log(r)} \leq \limsup_{r \to 0} \left [\frac{\log(L_1|\mathcal{K}|^H)}{-\log(r)} + \frac{H \log(c_{min})}{\log(r)}\right] + H = H.$$

\end{proof}

\begin{remark}  For $E \subset \mathcal{K}$, the following inequalities are well-known:
$$ \dim_H(E) \leq \dim_P(E) \leq \overline{\dim}_B(E) \text{ and } \dim_H(E) \leq \underline{\dim}_B \leq \overline{\dim}_B(E),$$
where $\dim_P(E)$ denote the packing dimension of $E$.  For more information on packing dimension, see [1].
Hence, we have also shown that 
$$ h \leq \dim_P(\mathcal{F}) \leq H \text{ and } h \leq \underline{\dim}_B(\mathcal{F}) \leq H.$$
\end{remark}

\section{Main theorem for Sofic Subshifts}
In this section, we will extend the assertions from Theorems 4.6 and 4.7 to sofic subshifts.  Recall that SFTs are sofic subshifts, but there exist sofic subshifts which cannot be represented as a SFT.  A common charaterization of a sofic shift $(Y, \sigma)$ is that it must be a factor of some SFT, some $(X, \sigma)$.   That is, there exists a continuous map $\psi : X \to Y$ such that $\sigma \circ \psi = \psi \circ \sigma$.

\medskip
We adopt the following definitions from [6].
Let $\mathcal{G} = (G, \mathcal{L})$ be a labeled graph, consisting of a graph $G$ with edge set $\mathcal{E}$ and a labeling $\mathcal{L}: \mathcal{E} \to \mathcal{A}$, where $\mathcal{A}$ is the finite alphabet.  
A subset $X$ of the full shift is a {\it sofic subshift} if $X = X_{\mathcal{G}}$ for some labeled graph $\mathcal{G}$.
Let $\mathcal{G}=(G, \mathcal{L})$ be a labeled graph.  $\mathcal{G}$ is called {\it right-resolving} if for each vertex $v$ in $G$, all edges leaving $v$ have different labels.

\medskip
It is known that every sofic shift has a right-resolving graph presentation [6].  Hence, if $X_{\mathcal{G}}$ is a sofic subshift, we will assume that $\mathcal{G}$ is a right-resolving presentation.  
Notice that $\mathcal{G}$ has $k$ states, which correspond to $k$ total vertices from the graph $G$.  Now, define a $k \times k$ adjacency matrix $M_\mathcal{G}$ by defining the entries as $m_{i,j} = \sum_{e_{i,j}} \mathcal{L}(e_{i,j})$, where $e_{i,j}$ is an edge from vertex $v_i$ to $v_j$ in the graph $G$.   For more information on sofic subshifts and associated graphs, see [6].  We define another $k \times k$ matrix $M_{\mathcal{G},t}$ by defining the entries as $m_{i,j}^{(t)} = \sum_{e_{i,j}} \mathcal{L}(e_{i,j})^t$.

\medskip
Let $\{\mathcal{K}; f_1, \ldots, f_m\}$ be a hyperbolic IFS with $c_i d(x,y) \leq d(f_i(x), f_i(y)) \leq \overline{c}_i d(x,y)$ for $1\leq i \leq m$ and all $x,y \in \mathcal{K}$.  
We will define two $k \times k$ matrices, $A_{\mathcal{G},t}$ and $\overline{A}_{\mathcal{G},t}$ similar to the matrix $M_{\mathcal{G},t}$ above.  Let $A_{\mathcal{G},t}$ be defined by the entries  $a^{(t)}_{i,j} = \sum_{e_{i,j}} c_{(\mathcal{L}(e_{i,j}))}^t$, where $a^{(t)}_{i,j}$ denotes the $(i,j)$-th entry of $A_{\mathcal{G},t}$.  
Let $\overline{A}_{\mathcal{G},t}$ be defined by the entries  $\overline{a}^{(t)}_{i,j} = \sum_{e_{i,j}} \overline{c}_{(\mathcal{L}(e_{i,j}))}^t$.
\begin{lemma} Let $X_\mathcal{G}$ be a sofic subshift, where $\mathcal{G} = (G,\mathcal{L})$.  If $G$ has $k$ vertices, then 
$$ \frac{1}{k} \sum_{i,j =1}^k [A_{\mathcal{G},t}^n]_{i,j} \leq \sum_{\omega \in W_n} c_\omega^t \leq \sum_{i,j=1}^k  [A_{\mathcal{G},t}^n]_{i,j},$$
where $W_n$ denotes all allowable words of length $n$ from $X_\mathcal{G}$.
\end{lemma}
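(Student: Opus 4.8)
The plan is to read the entries of $A_{\mathcal{G},t}^n$ as weighted sums over paths in $G$ and then compare, for each allowable word of length $n$, the number of paths that spell it against the total number of vertices $k$.

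First I would expand the matrix product: for any $1\le i,j\le k$,
$$[A_{\mathcal{G},t}^n]_{i,j}=\sum_{j_1,\dots,j_{n-1}=1}^{k} a^{(t)}_{i,j_1}a^{(t)}_{j_1,j_2}\cdots a^{(t)}_{j_{n-1},j}.$$
Since each entry $a^{(t)}_{p,q}=\sum_{e_{p,q}}c_{\mathcal{L}(e_{p,q})}^t$ is itself a sum over the edges of $G$ from $v_p$ to $v_q$, multiplying everything out turns $[A_{\mathcal{G},t}^n]_{i,j}$ into a sum over all length-$n$ paths $\pi=e_1e_2\cdots e_n$ in $G$ from $v_i$ to $v_j$, with $\pi$ contributing $\prod_{l=1}^{n}c_{\mathcal{L}(e_l)}^t$. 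Writing $\mathcal{L}(\pi)=\mathcal{L}(e_1)\mathcal{L}(e_2)\cdots\mathcal{L}(e_n)$ for the word read off $\pi$ and using multiplicativity of $c$ along words, this contribution equals $c_{\mathcal{L}(\pi)}^t$. Summing over $i$ and $j$ gives
$$\sum_{i,j=1}^{k}[A_{\mathcal{G},t}^n]_{i,j}=\sum_{\pi}c_{\mathcal{L}(\pi)}^t,$$
the sum ranging over all paths of length $n$ in $G$.

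Next I would group paths by the word they spell. Since $X_{\mathcal{G}}$ is the sofic shift presented by $\mathcal{G}$ (and, as usual, the presentation may be assumed essential), a word of length $n$ is allowable for $X_{\mathcal{G}}$ if and only if it equals $\mathcal{L}(\pi)$ for at least one length-$n$ path $\pi$; letting $p(\omega)$ denote the number of such paths for $\omega\in W_n$, the displayed identity becomes $\sum_{i,j}[A_{\mathcal{G},t}^n]_{i,j}=\sum_{\omega\in W_n}p(\omega)\,c_\omega^t$. The right-hand inequality of the lemma is then immediate from $p(\omega)\ge 1$ for every $\omega\in W_n$. For the left-hand inequality I would invoke the right-resolving hypothesis: since all edges leaving a given vertex carry distinct labels, reading the letters of a word $\omega$ one at a time from a fixed initial vertex determines the edges of a path uniquely (a straightforward induction on $\ell(\omega)$), so at most one length-$n$ path starting at each of the $k$ vertices spells $\omega$; hence $p(\omega)\le k$, and $\sum_{i,j}[A_{\mathcal{G},t}^n]_{i,j}=\sum_{\omega\in W_n}p(\omega)c_\omega^t\le k\sum_{\omega\in W_n}c_\omega^t$. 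Dividing by $k$ finishes the proof.

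The main thing to be careful about is the right-resolving step together with the identification ``allowable word of $X_{\mathcal{G}}$'' $\iff$ ``word spelled by a finite path of $G$'': the forward direction gives $p(\omega)\ge 1$ and is what the upper bound needs, while the reverse direction (valid once $\mathcal{G}$ is taken essential) guarantees that no term $c_\omega^t$ with $\omega\notin W_n$ enters $\sum_{\pi}c_{\mathcal{L}(\pi)}^t$, which is what the lower bound needs. Beyond that, the argument is just the routine bookkeeping of matrix powers as path sums, and it is insensitive to the value of $t$.
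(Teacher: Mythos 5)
Your proof is correct and follows essentially the same route as the paper: interpret $\sum_{i,j}[A_{\mathcal{G},t}^n]_{i,j}$ as a sum of $c_{\mathcal{L}(\pi)}^t$ over all length-$n$ labeled paths, get the upper bound because every allowable word is spelled by at least one path, and get the lower bound because the right-resolving property limits each word to at most one path per starting vertex, hence at most $k$ representations. Your version is simply more explicit than the paper's (the path-sum expansion, the count $p(\omega)\le k$, and the essential-presentation caveat ensuring every path label is allowable), all of which the paper leaves implicit.
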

\begin{proof} Let $\omega \in W_n$ for some $n \geq 1$.  Notice that there may be more than one representation for $\omega$ in $\mathcal{G}$.  Since $\displaystyle \sum_{i,j=1}^k [A_{\mathcal{G}}^n]_{ij}$ sums contractive factors related to all labeled paths of length $n$ in $\mathcal{G}$, then $\displaystyle \sum_{\omega \in W_n} c_\omega^t \leq \sum_{i,j=1}^k [A_{\mathcal{G},t}^n]_{ij}$.  Now, if $G$ has $k$ vertices, then $\mathcal{G}$ also has $k$ vertices.  
By assumption, $\mathcal{G}$ is right-resolving, meaning no two edges leaving the same vertex have the same label. Hence, any $\omega \in W_n$ can have at most $k$ representations in $\mathcal{G}$ .  Therefore, for fixed value $k$, $\displaystyle \frac{1}{k}  \sum_{i,j =1}^k [A_{\mathcal{G},t}^n]_{i,j} \leq \sum_{\omega \in W_n} c_\omega^t$.
\end{proof}

\begin{theo}
Let $\{\mathcal{K}; f_1, \ldots, f_m\}$ be a hyperbolic IFS with $c_i d(x,y) \leq d(f_i(x), f_i(y)) \leq \overline{c}_id(x,y)$ for $1 \leq i \leq m$ and all $x,y \in \mathcal{K}$.  Let $X_{\mathcal{G}}$ be a sofic subshift on the alphabet $\mathcal{A} = \{1, \ldots, m\}$ and $\mathcal{F}_{\mathcal{G}}$ be the subfractal defined by the IFS and $X_\mathcal{G}$.  Suppose $A_\mathcal{G}$ is irreducible.  If $\rho(A_{\mathcal{G},h}) = 1 = \rho(\bar{A}_{\mathcal{G},H})$, then 
$$ h \leq \dim_{H}(\mathcal{F}_{\mathcal{G}}) \leq H \text{ and } h \leq \overline{\dim}_B(\mathcal{F}_{\mathcal{G}}) \leq H.$$ 
\end{theo}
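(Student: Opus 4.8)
The plan is to transfer the entire argument of Section 4 to the sofic setting, with Lemma 5.1 playing the role that the exact identity of Lemma 3.6 played for SFTs. Nothing in Definition 3.1 or in Propositions 3.2--3.5 uses the SFT structure, so the lower and upper topological pressure functions $P(t)$ and $\bar{P}(t)$ of $\mathcal{F}_{\mathcal{G}}$ are again well defined, strictly decreasing, convex, continuous, and have unique zeros $h \le H$. The first step is the sofic analogue of the remark following Lemma 4.1: since $A_{\mathcal{G}}$ is irreducible, each of the matrices $A_{\mathcal{G},t}$ and $\bar{A}_{\mathcal{G},t}$ has the same nonzero pattern and is therefore irreducible, so Perron--Frobenius furnishes constants $b_0,d_0>0$ with $b_0 \lambda_{A_{\mathcal{G},t}}^n \le \sum_{i,j=1}^k [A_{\mathcal{G},t}^n]_{ij} \le d_0 \lambda_{A_{\mathcal{G},t}}^n$; inserting the two-sided bound of Lemma 5.1 and applying $\frac{1}{n}\log(\cdot)$, the constants $k$, $b_0$, $d_0$ all vanish in the limit, so $P(t)=\log\rho(A_{\mathcal{G},t})$ and $\bar{P}(t)=\log\rho(\bar{A}_{\mathcal{G},t})$. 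In particular the zero $h$ of $P$ is the value with $\rho(A_{\mathcal{G},h})=1$ and the zero $H$ of $\bar{P}$ is the value with $\rho(\bar{A}_{\mathcal{G},H})=1$, matching the hypothesis of the statement.

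Next I would reproduce Propositions 4.2 and 4.3: evaluating the Perron--Frobenius bounds at $t=h$, where $\lambda_{A_{\mathcal{G},h}}=1$, and using Lemma 5.1 once more, there are constants $K_0,L_0>0$ with $K_0 \le \sum_{\omega\in W_n}c_\omega^h \le L_0$ for every $n$, and symmetrically $K_1 \le \sum_{\omega\in W_n}\bar{c}_\omega^H \le L_1$. With these uniform word-sum bounds in hand, the two upper bounds are immediate repetitions of the proofs of Theorems 4.6 and 4.7: covering $\mathcal{F}_{\mathcal{G}}$ by $\{f_\omega(\mathcal{K}):\omega\in W_n\}$ gives $\mathcal{H}^H(\mathcal{F}_{\mathcal{G}})\le|\mathcal{K}|^H L_1<\infty$, so $\dim_H(\mathcal{F}_{\mathcal{G}})\le H$; and the $\mathcal{U}_r/\mathcal{O}_k$ covering estimate of Theorem 4.7, which uses only $\sum_{\omega\in W_k}\bar{c}_\omega^H\le L_1$ and the inequality $|f_\omega(\mathcal{K})|\ge c_{min}r$ valid for $f_\omega(\mathcal{K})\in\mathcal{U}_r$, gives $\overline{\dim}_B(\mathcal{F}_{\mathcal{G}})\le H$.

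For the lower bound $\dim_H(\mathcal{F}_{\mathcal{G}})\ge h$ I would run the mass-distribution construction of Section 4 verbatim. Set $\nu_n([\![ \omega ]\!]) = (\sum_{\omega\tau\in W_{n+\ell(\omega)}}c_{\omega\tau}^h)/(\sum_{\tau\in W_{n+\ell(\omega)}}c_\tau^h)$; the uniform bounds just obtained give $0\le\nu_n([\![ \omega ]\!])\le\frac{L_0}{K_0}c_\omega^h$, so a Banach limit yields a finitely additive set function $\nu$ with $\sum_{i=1}^m\nu([\![ \omega i ]\!])=\nu([\![ \omega ]\!])$, which the Kolmogorov extension theorem promotes to a Borel probability measure $\gamma$ on $X_{\mathcal{G}}$; putting $\mu_h=\gamma\circ\pi^{-1}$ on $\mathcal{F}_{\mathcal{G}}$ gives $\mu_h(f_\omega(\mathcal{K}))\le\frac{L_0}{K_0}c_\omega^h$ as in Corollary 4.4. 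Proposition 4.5 is purely geometric, using only the open set condition (which is assumed here as in Main Theorem B), so it carries over unchanged; combining it with the estimate on $\mu_h$ gives $\limsup_{r\to0}\mu_h(B(x,r))/r^h<\infty$ for every $x\in\mathcal{F}_{\mathcal{G}}$, and the uniform mass distribution principle then forces $\mathcal{H}^h(\mathcal{F}_{\mathcal{G}})>0$, i.e. $\dim_H(\mathcal{F}_{\mathcal{G}})\ge h$. Since $\dim_H\le\underline{\dim}_B\le\overline{\dim}_B$, both displayed chains of inequalities follow.

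The one genuine departure from Section 4, and the step I expect to need the most care, is that an allowable word of $X_{\mathcal{G}}$ is no longer coded by a single path in the presenting graph, so the exact identity of Lemma 3.6 is replaced by the two-sided estimate of Lemma 5.1. The task is to verify that every invocation of that identity in Section 4, namely the identification $P(t)=\log\rho$, the uniform bounds in Propositions 4.2 and 4.3, and Corollary 4.4, survives with only a harmless factor $k$ absorbed into the constants $K_0,L_0,K_1,L_1$; this works precisely because right-resolvingness caps the number of graph representations of a length-$n$ word by $k$, a constant independent of $n$.
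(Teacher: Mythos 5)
Your proposal is correct and follows essentially the same route as the paper: the paper's own proof of Theorem 5.2 simply uses Lemma 5.1 to rewrite $P(t)$ and $\bar{P}(t)$ as $\lim_{n\to\infty}\frac{1}{n}\log\sum_{i,j=1}^k[A_{\mathcal{G},t}^n]_{ij}$ (resp.\ with $\overline{A}_{\mathcal{G},t}$), noting the factor $k$ disappears in the limit, and then declares that the remainder follows as in Theorems 4.6 and 4.7. Your more explicit tracking of how the factor $k$ is absorbed into the Perron--Frobenius constants and into the analogues of Propositions 4.2--4.3 and Corollary 4.4 is exactly the verification the paper leaves implicit.
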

\begin{proof}
By Lemma 5.1, we can rewrite the topological lower and upper  pressure function as 
$$P(t) = \lim_{n \to \infty} \frac{1}{n} \log \left (\sum_{\omega \in W_n} c_\omega^t \right ) = \lim_{n \to \infty} \frac{1}{n} \log \left ( \sum_{i,j=1}^k [A_{\mathcal{G},t}^n]_{i,j} \right ) \text{ and }$$
$$\overline{P}(t) =  \lim_{n \to \infty} \frac{1}{n} \log \left ( \sum_{i,j=1}^k [\overline{A}_{\mathcal{G},t}^n]_{i,j} \right ).$$
The remainder of the proof follows as in Theorem 4.6 and Theorem 4.7.
\end{proof}

\begin{remark}
Similar to Remark 2, the values of $h$ and $H$ such that $P(h) =0 = \overline{P}(H)$ also satisfy $\rho(A_{\mathcal{G},h})=1 = \rho(\overline{A}_{\mathcal{G},H})$.
\end{remark}

\begin{remark} 
Similar to Remark 3, due to known relationships between Hausdorff, packing, upper and lower box dimensions, we also have
$$ h \leq \dim_P(\mathcal{F}_\mathcal{G}) \leq H \text{ and } h \leq \underline{\dim}_B(\mathcal{F}_\mathcal{G}) \leq H.$$
\end{remark}

\section{Generalization to reducible matrices}
In this section, we will eliminate the irreducibility condition on the matrices in the case of Hausdorff dimension.  Consider the case where $A_\mathcal{G}$ (or $A_G$ if we have an SFT) is a reducible matrix.
Let $A$ be a reducible $m \times m$ (0,1)- matrix, and $\mathcal{G}$ be the associated graph.  Since $A$ is a reducible matrix, the graph $\mathcal{G}$ is not strongly connected, but it contains a finite number of strongly connected components, say $C_1, \ldots, C_k$.  To each component, we can associate a submatrix $A_1, \ldots A_k$ where $A_i$ is irreducible for $1\leq i \leq k$.  Now, we can simultaneously permute the rows and columns of $A$ to obtain:\\
$$\tilde{A} = \begin{bmatrix} A_k&0&0& \cdots & 0 \\ \ast & A_{k-1} & 0& \cdots &0 \\ \ast & \ast & A_{k-2} & \cdots & 0\\ \vdots & \vdots & \vdots & \ddots & \vdots \\ \ast & \ast & \ast & \cdots & A_1 \end{bmatrix}.$$
For further details on this process, see [6].

\medskip
The process used to obtain $\tilde{A}$ from $A$ will not affect the characteristic polynomial, and hence $A$ and $\tilde{A}$ have the same eigenvalues.  
We can also examine $A^n$ and $\tilde{A}^n$ similarly; that is, by simultaneously interchanging rows and columns of $A$, each entry of $A^n$ will appear in $\tilde{A}^n$, although possibly in a different entry position.  Hence, we can assume $\displaystyle \sum_{i,j=1}^m (A^n)_{ij} = \sum_{i,j=1}^m (\tilde{A}^n)_{ij}$ [6].  Without loss of generality, we will assume that $A$ is in the form of $\tilde{A}$.

\medskip
If $A$ is a reducible $m \times m$ matrix with irreducible components $A_1, \ldots, A_k$, let $A_i$ be an $m_i \times m_i$ matrix for $1\leq i \leq k$.  For $l<k$, we define the set 
$$trn(A_l,A_p) = \{a_{ij} \neq 0: \sum_{s=l+1}^k m_s \leq i \leq \sum_{s=l}^k m_s , \sum_{s=p+1}^k m_s \leq j \leq \sum_{s=p}^k m_s\}$$
 of all non-zero entries from $A_\mathcal{G}$ corresponding to a transitional edge in $\mathcal{G}$ from component $C_l$ associated with $A_l$ to the component $C_p$ associated with $A_p$.  Let $W_{trn}$ denote all finite words in $\Omega_k$ corresponding to a transitional edge from the graph $\mathcal{G}$. 

\medskip
Each strongly connected component of the graph, $C_i$, corresponds to an irreducible submatrix, $A_I$, and a subshift $X_{A_i}$, $1\leq i \leq k$.  For simplicity, we will talk about the construction of words in $X_{A}$ by using the strongly connected components $C_i$, $1 \leq i \leq k$ from $\mathcal{G}$.  Given the structure of the entire graph $\mathcal{G}$ and direction of the transitional edges, words in $X_A$ must begin in a component $C_i$, move through components $C_j$, and end in component $C_l$ where $1\leq i \leq j \leq l \leq k$.  To formalize this in the subshift setting, we introduce the following notation.

\medskip
For $1 \leq i < j \leq k$, let
$$X_{A_i} \circledast X_{A_j} =\{ \omega \in X_A : \omega = \tau a  \xi, \text{ where } \tau \in W_*(A_i), a \in W_{trn}, \xi \in X_{A_j}\}.$$
 Similarly, for $1 \leq i_1 < i_2 < \cdots < i_l \leq k$, we define 
$X_{A_{i_1}} \circledast \cdots \circledast X_{A_{i_l}} = \{ \omega \in X_A: \omega = \tau_1 a_1 \tau _2 \cdots  a_{l-1} \xi, \text{ where } \tau_j \in W_*(A_{i_j}), a_j \in W_{trn}\text{ for } 1 \leq j < l, \xi \in X_{A_{i_l}}\}.$ 

\begin{lemma}
If $\mathcal{G}$ has $k$ irreducible components for $k \geq 2$, then
$$X_\mathcal{G} = \left ( \bigcup_{i=1}^k X_{A_i} \right ) \cup \left ( \bigcup_{j=2}^k \hspace{.2in} \bigcup_{i_1, \ldots, i_j =1}^k X_{A_{i_1}} \circledast \cdots \circledast X_{A_{i_j}} \right),$$
where $i_l < i_{l+1}$ for $1 \leq l < j$.
\end{lemma}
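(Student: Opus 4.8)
## Proof proposal

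The plan is to prove the set equality by double inclusion, using the structure of the permuted matrix $\tilde{A}$ and the corresponding structure of paths in the graph $\mathcal{G}$. The key observation is that $\tilde{A}$ is block lower-triangular with irreducible diagonal blocks $A_k, A_{k-1}, \ldots, A_1$ (ordered so that $A_k$ sits in the top-left), which means that in the graph $\mathcal{G}$ every transitional edge runs from a component $C_l$ to a component $C_p$ with $l < p$ (or possibly reversed, depending on the indexing convention fixed above; I will use whichever direction is consistent with the definitions of $\circledast$ and $trn(A_l,A_p)$ given in the excerpt). Consequently, no infinite path can return to a component it has already left: once a path exits $C_{i}$ through a transitional edge into $C_{j}$ with $i \neq j$, it can never come back to $C_i$.

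The inclusion $\supseteq$ is essentially by construction: each $X_{A_i}$ consists of labels of infinite paths that stay forever inside the single strongly connected component $C_i$, and each $X_{A_{i_1}} \circledast \cdots \circledast X_{A_{i_j}}$ is, by definition, a set of sequences of the form $\tau_1 a_1 \tau_2 a_2 \cdots a_{j-1} \xi$ where the $\tau$'s are finite allowable words read off $C_{i_1}, \ldots, C_{i_{j-1}}$, the $a$'s are transitional edges, and $\xi$ is an infinite allowable word read off $C_{i_j}$; concatenating such pieces along valid transitions produces an infinite path in $\mathcal{G}$, hence an element of $X_\mathcal{G}$. For the harder inclusion $\subseteq$, take any $\omega \in X_\mathcal{G}$ and fix an infinite path in $\mathcal{G}$ whose label is $\omega$ (using that $\mathcal{G}$ is right-resolving, or just any presentation). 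The vertices visited by this path determine a sequence of components; by the no-return property above, this sequence of components is eventually constant and, before stabilizing, visits a strictly increasing (in the block index) finite list of distinct components $C_{i_1}, C_{i_2}, \ldots, C_{i_j}$ with $i_1 < i_2 < \cdots < i_j$. The path therefore decomposes: it spends a finite allowable word inside $C_{i_1}$, takes one transitional edge, spends a finite allowable word inside $C_{i_2}$, and so on, finally entering $C_{i_j}$ and staying there forever, contributing an infinite allowable word $\xi \in X_{A_{i_j}}$. If $j = 1$ the path never leaves its initial component and $\omega \in X_{A_{i_1}}$; if $j \geq 2$ this is exactly the defining form of an element of $X_{A_{i_1}} \circledast \cdots \circledast X_{A_{i_j}}$. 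In every case $\omega$ lies in the right-hand side.

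The main obstacle — really the only non-bookkeeping point — is verifying carefully that the sequence of components visited by an infinite path is eventually constant and that the components, listed in order of first visit, have strictly increasing block indices, with no repeats. This follows from the block-triangular form of $\tilde{A}$: an entry $\tilde{A}_{uv} \neq 0$ with $u$ in block $A_s$ and $v$ in block $A_t$ forces $s \geq t$ in the triangular ordering, and $s = t$ exactly when the edge stays within a component; since each component is finite, a path that never leaves $C_{i_\ell}$ stays there forever once it has nowhere strictly lower to go, and there are only $k$ components, so at most $k$ transitional edges can ever occur. One should also note there is no issue of a path oscillating between two components, precisely because that would require transitional edges in both directions, contradicting triangularity. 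Writing out the decomposition $\omega = \tau_1 a_1 \cdots a_{j-1}\xi$ and matching it against the definition of $\circledast$ is then purely mechanical; care is only needed to handle the boundary cases $j=1$ and to allow empty $\tau_\ell$ (the path may pass through a component via a single transitional vertex without reading any edge inside it), which is already permitted since $W_*(A_{i_\ell})$ may be taken to contain the relevant length-zero or length-one words as the conventions of the paper allow.
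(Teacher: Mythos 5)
Your argument is correct, but it is organized differently from the paper's. The paper proves the lemma by induction on the number $k$ of strongly connected components: the base case $k=2$ is handled by asking where a word can start (in $C_1$, in $C_2$, or on a transitional edge, with an empty $\tau$ allowed in the last case), and the inductive step peels off the last component, observing that any word of $X_{\mathcal{G}}$ not already in $X_{\mathcal{G}|_{(k-1)}}$ must end in $C_k$. You instead give a direct double-inclusion proof: fix an infinite path presenting $\omega$, use the block-triangular form of $\tilde{A}$ (equivalently, that the strongly connected components form a directed acyclic structure, so a path can never return to a component it has left) to conclude that the sequence of visited components is strictly monotone in the block index and eventually constant, and read off the decomposition $\omega=\tau_1 a_1\cdots a_{j-1}\xi$ with possibly empty $\tau_\ell$'s. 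The underlying structural fact is the same one the paper uses implicitly in its inductive step ("new words must end in $C_k$"), but your version makes it explicit, handles all $k$ at once without induction, and cleanly isolates the only delicate points (no oscillation between components, at most $k-1$ transitions, empty intermediate words); the paper's induction is shorter to write per step but hides exactly those points. Your caveat about the indexing direction of transitional edges is harmless, since the paper's ordering (edges go from lower-indexed to higher-indexed components, matching the definition of $trn(A_l,A_p)$ and of $\circledast$) is the consistent choice you defer to.
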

\begin{proof}
 We will use induction for this argument.  If $\mathcal{G}$ has two strongly connected components, $C_1$ and $C_2$, with at least one transitional edge from $C_1$ to $C_2$ then it follows that $X_{A_1} \cup X_{A_2} \cup (X_{A_1} \circledast X_{A_2})\subseteq X_{A_\mathcal{G}}$. 
 Now, let $\omega \in X_{A_\mathcal{G}}$.  Then, $\omega$ must begin in either $C_1$, $C_2$, or on a transitional edge.  
If $\omega$ starts in $C_2$, then $\omega \in X_{A_2}$ because there are no transitional edges leaving $C_2$ in $\mathcal{G}$.  
If $\omega$ starts on a transitional edge, then $\omega \in (X_{A_1} \circledast X_{A_2})$ because it is of the form $\omega = \tau * a * \xi$ where $\tau$ is the empty word from $W_*(A_1)$.  
If $\omega$ starts in $C_1$, then either $\omega \in X_{A_1}$ or $\omega \in (X_{A_1} \circledast X_{A_2})$.  Hence, we must have 
$$X_{A_1} \cup X_{A_2} \cup (X_{A_1} \circledast X_{A_2})= X_{A_\mathcal{G}}.$$
Now, assume $\mathcal{G}$ is a connected graph with $k$ strongly connected components, and consider the subgraph, say $\mathcal{G}|_{(k-1)}$, consisting of the first $k-1$ components.  Assume that 
$$ X_{{\mathcal{G}|_{(k-1)}}} = (\bigcup_{i=1}^{k-1} X_{A_i}) \cup (\bigcup_{j=2}^{k-1} \bigcup_{i_1, \ldots i_j =1}^{k-1} X_{A_{i_1}} \circledast \cdots \circledast X_{A_{i_j}}).$$
By comparing the graphs $\mathcal{G}$ and $\mathcal{G}|_{(k-1)}$ and their corresponding subshifts $X_{\mathcal{G}}$ and $X_{{\mathcal{G}|_{(k-1)}}}$, we can conclude that any word in $X_{\mathcal{G}} - X_{{\mathcal{G}|_{(k-1)}}}$ will end in $C_k$.  Hence,
$$X_{A_\mathcal{G}} = X_{A_{\mathcal{G}_{k-1}}} \cup X_{A_k} \cup \left (\bigcup_{i_1, \ldots i_j=1}^{k-1} X_{A_{i_1}} \circledast \cdots \circledast X_{A_{i_j}} \circledast X_{A_k}\right),$$
which satisfies the assertion.
\end{proof}

\begin{prop}
Let $A_\mathcal{G}$ be a reducible matrix with irreducible components $A_1, \ldots, A_k$.  Then, 
$$ h_{i_j} \leq \dim_H(\mathcal{F}_{X_{A_{i_1}} \circledast \cdots \circledast X_{A_{i_j}}}) \leq H_{i_j},$$
where
$$ h_{i_j}\leq  \dim_H(\mathcal{F}_{X_{A_{i_j}}}) \leq H_{i_j}$$
and $h_{i_j}$, $H_{i_j}$ are the bounds from Theorem 5.2.
\end{prop}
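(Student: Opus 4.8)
The plan is to realize the subfractal $\mathcal{F}_{X_{A_{i_1}}\circledast\cdots\circledast X_{A_{i_j}}}$ as a countable union of bi-Lipschitz copies of ``rooted'' subfractals attached to the last component $A_{i_j}$, and then to exploit that Hausdorff dimension is countably stable and invariant under bi-Lipschitz maps. For a vertex $v$ of the strongly connected component $C_{i_j}$, write $\mathcal{F}^{(v)}_{X_{A_{i_j}}}=\{\pi(\xi):\xi\in X_{A_{i_j}},\ \xi\text{ begins at }v\}$. By the definition of $\circledast$, every $\omega\in X_{A_{i_1}}\circledast\cdots\circledast X_{A_{i_j}}$ factors as $\omega=\nu\xi$, where $\nu=\tau_1a_1\cdots\tau_{j-1}a_{j-1}$ is a finite allowable word whose terminal vertex $v_\nu$ lies in $C_{i_j}$ and $\xi\in X_{A_{i_j}}$ begins at $v_\nu$; since $\pi(\nu\xi)=f_\nu(\pi(\xi))$, this yields
\[
\mathcal{F}_{X_{A_{i_1}}\circledast\cdots\circledast X_{A_{i_j}}}=\bigcup_{\nu\in\mathcal{P}} f_\nu\bigl(\mathcal{F}^{(v_\nu)}_{X_{A_{i_j}}}\bigr),
\]
where $\mathcal{P}$ is the countable set of all such prefixes $\nu$ (realizable in $\mathcal{G}$); one checks both inclusions directly.

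Next I would record two elementary facts. First, for any finite allowable word $\eta$ the map $f_\eta$ satisfies $c_\eta\,d(x,y)\le d(f_\eta(x),f_\eta(y))\le\bar{c}_\eta\,d(x,y)$ with $0<c_\eta\le\bar{c}_\eta<1$, hence $f_\eta$ is a bi-Lipschitz homeomorphism onto its image and preserves Hausdorff dimension. Second, all rooted subfractals of the fixed irreducible component $A_{i_j}$ share a common Hausdorff dimension, equal to $\dim_H(\mathcal{F}_{X_{A_{i_j}}})$: given vertices $v,w$ of $C_{i_j}$, irreducibility of $A_{i_j}$ supplies a finite path from $v$ to $w$, labeled $\eta$, so $f_\eta\bigl(\mathcal{F}^{(w)}_{X_{A_{i_j}}}\bigr)\subseteq\mathcal{F}^{(v)}_{X_{A_{i_j}}}$ and therefore $\dim_H\bigl(\mathcal{F}^{(w)}_{X_{A_{i_j}}}\bigr)\le\dim_H\bigl(\mathcal{F}^{(v)}_{X_{A_{i_j}}}\bigr)$; running this both ways gives equality, and then $\mathcal{F}_{X_{A_{i_j}}}=\bigcup_{v\in C_{i_j}}\mathcal{F}^{(v)}_{X_{A_{i_j}}}$ (a finite union), together with finite stability of $\dim_H$, identifies this common value with $\dim_H(\mathcal{F}_{X_{A_{i_j}}})$. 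Applying Theorem 5.2 to the irreducible component $A_{i_j}$ shows this value lies in $[h_{i_j},H_{i_j}]$, which also gives the auxiliary inequality $h_{i_j}\le\dim_H(\mathcal{F}_{X_{A_{i_j}}})\le H_{i_j}$ asserted in the statement.

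Finally I would combine these. For the upper bound, countable stability gives
\[
\dim_H\bigl(\mathcal{F}_{X_{A_{i_1}}\circledast\cdots\circledast X_{A_{i_j}}}\bigr)=\sup_{\nu\in\mathcal{P}}\dim_H\bigl(f_\nu(\mathcal{F}^{(v_\nu)}_{X_{A_{i_j}}})\bigr)=\sup_{\nu\in\mathcal{P}}\dim_H\bigl(\mathcal{F}^{(v_\nu)}_{X_{A_{i_j}}}\bigr)\le H_{i_j},
\]
using bi-Lipschitz invariance for the middle equality and the second fact for the bound. For the lower bound, fix any single $\nu_0\in\mathcal{P}$ (which exists once the chain under consideration is genuinely realizable in $\mathcal{G}$; otherwise the set is empty and contributes nothing in Lemma 6.1); then $\mathcal{F}^{(v_{\nu_0})}_{X_{A_{i_j}}}\neq\emptyset$, since $C_{i_j}$ is strongly connected, and
\[
\dim_H\bigl(\mathcal{F}_{X_{A_{i_1}}\circledast\cdots\circledast X_{A_{i_j}}}\bigr)\ge\dim_H\bigl(f_{\nu_0}(\mathcal{F}^{(v_{\nu_0})}_{X_{A_{i_j}}})\bigr)=\dim_H\bigl(\mathcal{F}^{(v_{\nu_0})}_{X_{A_{i_j}}}\bigr)\ge h_{i_j},
\]
again by the two facts. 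I expect the main obstacle to be the second fact --- the transport of Hausdorff dimension across an irreducible component --- together with the bookkeeping that makes the decomposition precise: pairing each finite prefix $\nu$ with the vertex of $C_{i_j}$ at which the infinite tail must begin, and verifying $\pi(\nu\xi)=f_\nu(\pi(\xi))$ in the labeled-graph setting. It is also worth emphasizing that this argument is genuinely restricted to Hausdorff dimension: countable stability is invoked twice and has no counterpart for box dimension, which is exactly why the reducible case is treated only for $\dim_H$. Every other ingredient --- bi-Lipschitz invariance of $\dim_H$, finite and countable stability, and Theorem 5.2 --- is used as a black box.
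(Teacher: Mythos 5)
Your proposal is correct, and it reaches the same two bounds as the paper but by a noticeably different route. The paper also decomposes $X_{A_{i_1}}\circledast\cdots\circledast X_{A_{i_j}}$ into countably many prefixed pieces $\omega X_{A_{i_j}}$ and takes a supremum (countable stability), but from there it argues differently: for the upper bound it uses the containment $\mathcal{F}_{\omega X_{A_{i_j}}}\subseteq\mathcal{F}_{X_{A_{i_j}}}$ (which is immediate under the paper's composition convention $f_{\omega\xi}=f_\xi\circ f_\omega$, whereas your identity $\pi(\nu\xi)=f_\nu(\pi(\xi))$ is the standard-convention counterpart), and for the lower bound it recomputes the lower topological pressure of the prefixed set, showing $P_{\omega,i}(t)=P_i(t)$, and then asserts that the measure-theoretic argument of Theorem 5.2 carries over to $\omega X_{A_{i_j}}$ even though it is not itself a subshift. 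You replace that pressure/mass-distribution step entirely with two soft facts: bi-Lipschitz invariance of $\dim_H$ under the maps $f_\nu$ (using the lower contraction bounds $c_\nu$), and a transport lemma showing all rooted subfractals $\mathcal{F}^{(v)}_{X_{A_{i_j}}}$ of an irreducible component share the dimension of $\mathcal{F}_{X_{A_{i_j}}}$, so Theorem 5.2 is invoked only once, as a black box, for the irreducible component itself. What your route buys is rigor at exactly the point where the paper is thinnest: it avoids having to rebuild the measure on a non-subshift set, and your rooted-vertex bookkeeping handles the sofic subtlety that only those tails $\xi$ presentable from the terminal vertex of the prefix may follow it, which the paper's computation $\sum_{\tau\in W_{n-m}(X_{A_{i_j}})}c_\omega^t c_\tau^t$ quietly glosses over; what the paper's route buys is that the pressure identity $P_{\omega,i}=P_{i_j}$ is stated explicitly, which is the conceptual reason the bounds do not degrade under prefixing. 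Your closing observations --- that only one realizable prefix is needed for the lower bound (with the empty-set case deferred to Lemma 6.1), and that the reliance on countable stability is precisely why the reducible case is confined to Hausdorff dimension --- are accurate and consistent with the paper.
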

\begin{proof}
Consider a finite word $\tau_1a_1 \tau_2 a_2 \ldots \tau_{j-1}a_{j-1}$ where $\tau_l \in W_*(A_{i_l})$ for $1\leq l \leq j-1$ and $a_l \in trn(A_{i_l}, A_{i_l+1})$.  For any $n \geq 1$, 
 there are finitely many words $\tau_l \in W_*(A_{i_l})$ with $\ell(\tau_l) \leq n$, for all $1 \leq l \leq j$. 
 Hence, there are finitely many words of the form $\tau_1 a_1 \cdots \tau_{j-1}a_{j-1}$ of length $n$.  
So, the collection $S = \{\tau_1 a_1 \cdots \tau_{j-1}a_{j-1} : \tau_l \in  W_*(A_{i_l}) \text{ for } 1\leq l \leq j-1,  a_l \in trn(A_{i_l}, A_{i_l+1}), \ell(\tau_1 a_1 \cdots \tau_{j-1}a_{j-1} ) < \infty \}$ is at most countable since $W_*(A_{i_l})$ is countable for $i \leq i_l \leq k$. 
For $\omega \in S$, let $\omega X_{A_{i_j}} = \{ \omega \xi \in X_{A_{\mathcal{G}}} : \xi \in X_{A_{i_j}}\}$. 
 Then, $\dim_H((\mathcal{F}_{X_{A_{i_11}} \circledast \cdots \circledast X_{A_{i_j}}}) = \displaystyle \sup_{\omega \in S  } \dim_H(\mathcal{F}_{\omega X_{A_{i_j}}})$.\\
First, notice that $\mathcal{F}_{\omega X_{A_i}} = \{f_{\omega \xi} (x) : \xi \in X_{A_i} \text{ and } x \in \mathcal{K}\}$ for any $1 \leq i \leq k$.  
Recall that $f_{\omega \xi}(x) = f_{\xi} \circ f_{\omega}(x)$ and $f_\omega(x) \in \mathcal{K}$ for all $x \in \mathcal{K}$.  Hence, $\mathcal{F}_{\omega X_{A_i}} \subseteq \mathcal{F}_{\omega_{A_i}}.$  Hence,
$$ \dim_H(\mathcal{F}_{\omega X_{A_i}}) \leq \dim_H(\mathcal{F}_{X_{A_i}}) \leq H_i,$$
where $H_i$ is the bound from Theorem 5.2.\\
Let $\omega \in S$ with $\ell(\omega) = m$ and $A_i$ be an irreducible block in $A$. Consider $\dim_H(\mathcal{F}_{\omega X_{A_i}})$.  
Although $\omega X_{A_i}$ is not necessarily a subshift itself, we can apply similar techniques used to prove Theorem 5.2 to show that the zero of the lower topological pressure function $P_{\omega, i}(t) = \lim_{n \to \infty} \frac{1}{n} \log \left ( \sum_{\tau \in W_n(\omega X_{A_i})} c_\tau^t \right )$ is a lower bound for $\dim_H(\mathcal{F}_{\omega X_{A_i}})$.
Notice that 
\begin{align*}
P_{\omega, i} (t) &= \lim_{n \to \infty} \frac{1}{n} \log \left ( \sum_{\tau \in W_n(\omega X_{A_i})} c_\tau^t \right ) 
= \lim_{n \to \infty} \frac{1}{n} \log \left ( \sum_{\tau \in W_{n-m}(X_{A_i})} c_\omega^t c_\tau^t \right ) \\
&= \lim_{n \to \infty} \frac{1}{n} \left [ \log(c_\omega^t) + \log \left( \sum_{\tau \in W_{n-m}(X_{A_i})} c_\tau^t \right ) \right ]\\
 &= \lim_{n \to \infty} \frac{1}{n-m} \log \left ( \sum_{\tau \in W_{n-m}(X_{A_i})} c_\tau^t \right )\\
&= P_{i}(t),
\end{align*}
where $P_{i}(t)$ is the lower topological pressure function associated with the subfractal $\mathcal{F}_{X_{A_i}}$.  Hence, 
$$ h_i \leq \\dim_H(\mathcal{F}_{\omega X_{A_i}}).$$ 
Thus, 
$$\dim_H(\mathcal{F}_{X_{A_{i_11}} \circledast \cdots \circledast X_{A_{i_j}}}) = \displaystyle \sup_{\omega \in S  } \dim_H(\mathcal{F}_{\omega X_{A_{i_j}}}) \leq  H_{i_j} \text{ and }$$
$$ h_{i_j} \leq \dim_H(\mathcal{F}_{X_{A_{i_11}} \circledast \cdots \circledast X_{A_{i_j}}}) ,$$
where $h_{i_j}$ and $H_{i_j}$ are the zeros of the upper and lower topological pressure functions $P_{i_j}(t)$ and $\overline{P}_{i_j}(t)$ with respective the subfractal $\mathcal{F}_{X_{A_{i_j}}}$ for some $1 \leq i_j \leq k$.
\end{proof}
For a similar statement about subshifts with a reducible matrix $A$, we have, by Lemma 6.1,
$$ \mathcal{F}_{X_{A_\mathcal{G}}}= \left ( \bigcup_{i=1}^k \mathcal{F}_{X_{A_i}} \right ) \cup \left ( \bigcup_{j=2}^k  \hspace{.1 in}\bigcup_{1\leq i_1 < \cdots < i_j \leq k}\mathcal{F}_{ X_{A_{i_1}} \circledast \cdots \circledast X_{A_{i_j}}} \right).$$
Thus, by Proposition 6.2, we have the following theorem.
\begin{theo}
Let $X_{A_\mathcal{G}}$ be a sofic subshift associated with matrix $A_\mathcal{G}$.  Assume $A_\mathcal{G}$ has irreducible components $A_1, \ldots, A_k$. 
 Let $\mathcal{F}_{X_{A_\mathcal{G}}}$ and $\mathcal{F}_{X_{A_i}}$ denote the sub-fractals associated with the subshifts $X_{A_{\mathcal{G}}}$ and $X_{A_i}$, respectively.  Then,
 $$\max_{1\leq i \leq k} \{h_i\} \leq \dim_H(\mathcal{F}_{X_{A_\mathcal{G}}}) \leq \max_{1\leq i \leq k} \{ H_i \},$$
where $P_i(h_i)= 0 = \overline{P}_i(H_i)$ given in Theorem 5.2 for all $1 \leq i \leq k$.
\end{theo}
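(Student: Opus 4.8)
The plan is to derive the theorem directly from the set-theoretic decomposition of $\mathcal{F}_{X_{A_\mathcal{G}}}$ displayed just above the statement (a consequence of Lemma 6.1) together with two elementary properties of Hausdorff dimension: monotonicity under inclusion, and finite stability, i.e. $\dim_H\big(\bigcup_{l=1}^N E_l\big) = \max_{1 \le l \le N}\dim_H(E_l)$. The first thing to record is that in
$$\mathcal{F}_{X_{A_\mathcal{G}}} = \left ( \bigcup_{i=1}^k \mathcal{F}_{X_{A_i}} \right ) \cup \left ( \bigcup_{j=2}^k \bigcup_{1 \le i_1 < \cdots < i_j \le k} \mathcal{F}_{X_{A_{i_1}} \circledast \cdots \circledast X_{A_{i_j}}} \right )$$
the union runs over at most $2^k$ index sets, so it is a \emph{finite} union; consequently $\dim_H(\mathcal{F}_{X_{A_\mathcal{G}}})$ is exactly the largest of the Hausdorff dimensions of the finitely many pieces appearing on the right.

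For the upper bound I would estimate each piece in turn. Theorem 5.2 gives $\dim_H(\mathcal{F}_{X_{A_i}}) \le H_i$ for every $1 \le i \le k$, and Proposition 6.2 gives $\dim_H(\mathcal{F}_{X_{A_{i_1}} \circledast \cdots \circledast X_{A_{i_j}}}) \le H_{i_j}$, the bound that Theorem 5.2 attaches to the \emph{terminal} irreducible component $A_{i_j}$ of the chain. Since $i_j \in \{1,\dots,k\}$ we have $H_{i_j} \le \max_{1 \le i \le k} H_i$, so every piece has Hausdorff dimension at most $\max_{1 \le i \le k} H_i$, and finite stability yields $\dim_H(\mathcal{F}_{X_{A_\mathcal{G}}}) \le \max_{1 \le i \le k} H_i$.

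For the lower bound I would use only monotonicity: for each $i$ one has $X_{A_i} \subseteq X_{A_\mathcal{G}}$, hence $\mathcal{F}_{X_{A_i}} = \pi(X_{A_i}) \subseteq \mathcal{F}_{X_{A_\mathcal{G}}}$, and Theorem 5.2 then gives $\dim_H(\mathcal{F}_{X_{A_\mathcal{G}}}) \ge \dim_H(\mathcal{F}_{X_{A_i}}) \ge h_i$. Taking the maximum over $i$ gives $\dim_H(\mathcal{F}_{X_{A_\mathcal{G}}}) \ge \max_{1 \le i \le k} h_i$, and combining the two inequalities finishes the proof.

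I do not expect a genuine obstacle here, since the real content already sits in Lemma 6.1 and Proposition 6.2; the points that need care are (i) confirming that the decomposition is a finite, not merely countable, union so that the elementary finite-stability property of $\dim_H$ applies, and (ii) noting that the bound extracted from a mixed piece is governed by its terminal component, whose index still ranges over $\{1,\dots,k\}$, so that no new numbers $h_i$, $H_i$ enter. It is worth stressing that this reduction is special to Hausdorff dimension: the analogous statement for $\overline{\dim}_B$ fails, because upper box dimension is not stable under finite unions, which is precisely why Section 6 is confined to $\dim_H$.
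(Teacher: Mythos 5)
Your proof is correct and follows essentially the paper's own route: the paper obtains the theorem exactly as you do, from the decomposition of $\mathcal{F}_{X_{A_\mathcal{G}}}$ supplied by Lemma 6.1 together with Proposition 6.2, Theorem 5.2 applied to each irreducible block, and stability of Hausdorff dimension over the finitely many pieces. One small correction to your closing aside: upper box dimension \emph{is} finitely stable---the real obstruction to a box-dimension analogue sits in Proposition 6.2, where the supremum over the countable family $S$ requires countable stability, which upper box dimension lacks.
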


\medskip
\noindent
{\bf Acknowledgements} 
The author acknowledges ND-EPSCoR and NSF grant 1355466 for funding her research.  She thanks her advisor, Do\u{g}an \c{C}\"omez for his guidance and many helpful conversations during the construction of this paper.  She is grateful to Michael P. Cohen for his edits and especially useful comments.  She also thanks Mrinal Kanti Roychowdhury and Azer Akhmedov for their comments on an earlier preprint.

\end{document}